\DeclarePairedDelimiter\abs{\lvert}{\rvert}
\newtheorem{theorem}{Theorem}
\newtheorem{lemma}{Lemma}
\newtheorem{corollary}{Corollary}
\newtheorem{remark}{Remark}
\newcommand*{\bfrac}[2]{\genfrac{}{}{0pt}{}{#1}{#2}}
\newcommand{\bs}{\boldsymbol}
\title{Universality in a Class of Fragmentation-Coalescence Processes}
\author{A. E. Kyprianou\thanks{Department of Mathematical Sciences, University of Bath, Claverton Down, Bath, BA2 7AY, UK. Email: \texttt{a.kyprianou@bath.ac.uk}, \texttt{s.pagett@bath.ac.uk}, \texttt{t.c.rogers@bath.ac.uk}}, \,
S. W. Pagett$^{*}$, and T. Rogers$^{*}$  }
\begin{document}
\maketitle

\begin{abstract}
We introduce and analyse a class of fragmentation-coalescence processes defined on finite systems of particles organised into clusters. Coalescent events merge multiple clusters simultaneously to form a single larger cluster, while fragmentation breaks up a cluster into a collection of singletons. Under mild conditions on the coalescence rates, we show that the distribution of cluster sizes becomes non-random in the thermodynamic limit. Moreover, we discover that in the limit of small fragmentation rate these processes exhibit a universal cluster size distribution regardless of the details of the rates, following a power law with exponent $3/2$.
\end{abstract}

\section{Introduction}
Processes of coalescence and its reverse, fragmentation, have been widely studied in physical chemistry since the seminal work of Smoluchowski \cite{Smoluchowski1916} a century ago (see \textit{e.g.} \cite{Aldous1999,Koutzenogii1996,Seinfeld1986} and references therein). Aside from chemical systems, these processes also have an important role to play in modelling genealogy \cite{Tavare1984,Lambert2005} and even the dynamics of group formation \cite{Gueron1995}. The microscopic specification of a fragmentation-coalescence process is usually given in terms of a stochastic process acting on a finite number of constituent particles, introduced by Marcus \cite{Marcus1968}, Lushnikov \cite{Lushnikov1978a,Lushnikov1978b} and Gueron \cite{Gueron1998}. However, in applied work is it common to write deterministic `mean-field' equations that are intended to describe behaviour in thermodynamic limit of large system size. An important question arises: are these processes self-averaging so that mean-field calculations are relevant \cite{EHSS1985,Jeon1998}? 

Another key concept in the understanding of large-scale interacting systems is that of universality --- that certain important macroscopic properties often do not depend on the detailed features of the particles and dynamics involved, but rather a much smaller set of properties determine how these processes behave in the thermodynamic limit. This is particularly true of critical phenomena, a famous example being the directed percolation phase transistion \cite{Hinrichsen2000}. In fragmentation-coalescence processes, the antagonistic nature of the driving mechanisms can give rise to self-organised criticality in certain limits, whereby the system evolves to a stationary distribution that exhibits scale-invariant behaviour normally characteristic of a phase transition \cite{RathToth2009}. It is therefore natural to ask if this behaviour is universal. 

Here we answer both of the above questions in the affirmative. We examine a class of fragmentation-coalescence processes that allow simultaneous coagulation of multiple clusters with different rates, combined with independent fragmentation events that shatter clusters to singletons. The fragmentation and coalescence rates are taken to be independent of cluster size, and we make a mild technical assumption on the coalescence rates. We show that processes of this class approach a non-random limit as the system size grows, and moreover, that in the limit of small fragmentation rate we observe a universal cluster size distribution with a power-law tail of exponent $3/2$.

\newpage
\subsection{Finite Fragmentation-Coalescence Processes}\label{class}
Consider a collection of $n$ identical particles, grouped together into some number of clusters. We define a stochastic dynamical process as follows: 
\begin{enumerate}
\item Every subset of $k$ clusters coalesces at rate $\alpha(k)n^{1-k}$, independently of everything else that happens in the system. The coalescing clusters are merged to form a single cluster with size equal to the sum of the sizes of the merged clusters.
\item Clusters fragment at rate $\lambda_n>0$, independently of everything else that happens in the system. Fragmentation of a cluster of size $k$ results in $k$ `singleton' clusters of size one.
\end{enumerate}
The standard choice of initial condition is the state with $n$ singleton clusters and this will be the case throughout this paper. The factor of $n^{1-k}$ appearing in the coalescence rates is included to compensate for the combinatorial explosion in the number of subsets of $k$ clusters as $n$ gets larger. In this way, when there are order $n$ clusters, the global rates of fragmentation and coalescence of any number of clusters are all order $n$ (in the case where $\lambda_n\equiv\lambda\in(0,\infty)$). Note that this choice is necessary to ensure that there is a single dominant timescale for the dynamics. 

For each $n\in\mathbb{N}$, and $k\in\{1,\ldots,n\}$, the state of the system is specified by the number of clusters of size $k$ at time $t$. To that end we introduce the random variables
\begin{equation*}
w_{n,k}(t):=\frac{1}{n}\#\{\text{clusters of size $k$ at time $t$}\},\quad 1\leq k \leq n,
\end{equation*}
which we take to be continuous from the left, with right limits, and combine these into the random vector 
\begin{equation*}
{\bs{w_n}}(t):=(w_{n,1}(t),\ldots,w_{n,n}(t)), 
\end{equation*}
which takes values on the simplex
\begin{equation*}
\mathcal{S}_n:=\left\{{\bs{w_n}}=(w_{n,1},\ldots,w_{n,n}):nw_{n,i}\in\{1,\ldots,n\}\text{ for }i\in\{1,\ldots,n\},\sum_{k=1}^nkw_{n,k}=1\right\}.
\end{equation*}Another natural quantity is the empirical cluster size distribution, defined by
\begin{equation}
  p_{n,k}(t):=\frac{\#\{\text{clusters of size $k$ at time $t$}\}}{\#\{\text{clusters at time $t$}\}},\quad 1\leq k \leq n.
  \label{p_def}
\end{equation}
From the description of the dynamics at the beginning of this subsection we can write down the infinitesimal generator of $\bs{w_n}(t)$ by summing over all possible events. To do this we must consider the number of possible ways of coalescing clusters of sizes $l_1,\ldots,l_k$. If the $l_i$ are distinct then we simply multiply together the number of clusters of each size. If some of the $l_i$ are the same then there is a complication as a cluster cannot coalesce with itself. To each configuration $l_1,\ldots,l_k$ we can associate a partition $\pi$ of $\{1,\ldots,k\}$ by $l_u=l_v$ if and only if $u,v\in\pi_i$ for some $i$. In this way the set $\mathcal{P}_k$, of all partitions of $\{1,\ldots,k\}$, enumerates all the ways we could have chosen $k$ cluster sizes with replacement. Hence,
\begin{align}
\mathcal{A}_nf(\bs{w_n})&=\lambda_n n\sum_{k=1}^n\left(f\left(\bs{w_n}-\frac{1}{n}\bs{e_{n,k}}+\frac{k}{n}\bs{e_{n,1}}\right)-f(\bs{w_n})\right)w_{n,k}\nonumber\\
&\qquad +\sum_{k=2}^n\frac{\alpha(k)}{k!n^{k-1}}\sum_{\pi\in\mathcal{P}_k}\sum_{l_1=1}^n\cdots\sum_{l_{\abs{\pi}}=1}^n\gamma_n(f,\pi,\bs{w_n}), \label{infgen}
\end{align}
where 
\begin{equation}
\gamma_n(f,\pi,\bs{w_n}):=\left(f\left(\bs{w_n}-\frac{1}{n}\sum_{i=1}^{\abs{\pi}}\abs{\pi_i}\bs{e_{n,l_i}}+\frac{1}{n}\bs{e_{n,\sum \abs{\pi_i} l_i}}\right)-f(\bs{w_n})\right)\prod_{i=1}^{\abs{\pi}}(nw_{n,l_i})_{\abs{\pi_i}},\label{gamma}
\end{equation}
where $\bs{e_{n,i}}$ is a vector of length $n$ with one in position $i$ and zero everywhere else, $\abs{\pi}$ is the number of blocks in the partition, $\abs{\pi_i}$ is the number of elements in the $i^{th}$ block of $\pi$ and $(y)_z$ is the falling factorial Pochhammer symbol.

Rather than working with $\bs{w_n}$ directly, considerable simplification is possible using the empirical generating functions $G_n:[0,1]\times\mathbb{R}^+\to\mathbb{R}$
\begin{equation}\label{bigG}
  G_n(x,t)=\sum_{k=1}^nx^kw_{n,k}(t),\qquad n\geq 1,
\end{equation}
and $g_n:\![0,1]\times\mathbb{R}^+\to\mathbb{R}$
\begin{equation}\label{littleg}
g_n(x,t):=\sum_{k=1}^nx^kp_{n,k}(t)=\frac{G_n(x,t)}{G_n(1,t)},\qquad n\geq 1\,.
\end{equation}

\subsection{Main Results}
Our first main result shows that processes in the class outlined in \ref{class} are self-averaging with respect to the distribution of cluster sizes as $n\to\infty$.

\begin{theorem}\label{thm1}
Suppose that the coalescence rates $\alpha:\mathbb{N}\to\mathbb{R}^+$ satisfy
\begin{equation}
  \label{alpha}
  \alpha(k)\leq C\exp(\gamma k\ln\ln(k))\,,\qquad \forall k\,,
\end{equation}
for some constants $C>0$ and $\gamma<1$. Define $\lambda=\lim_{n\to\infty}\lambda_n$, and let $G:\![0,1]\times\mathbb{R}^+\to\mathbb{R}$ be the unique solution of the deterministic initial value problem
\begin{equation}
\begin{gathered}
G(x,0)=x\,,\\
  \frac{\partial G}{\partial t}(x,t)=\lambda(x-G(x,t))+\sum_{k=2}^{\infty}\frac{\alpha(k)}{k!}\left(G(x,t)^k-kG(1,t)^{k-1}G(x,t)\right).
\end{gathered}\label{hatG}
\end{equation}

If $\lambda>0$ then in the limit $n\to \infty$ the empirical generating function $G_n(x,t)$ defined in (\ref{bigG}) converges to $G(x,t)$ in $L^2$, uniformly in $x$ and $t$. That is,
\begin{equation*}
  \sup_{x\in[0,1], t\in\mathbb{R}^+}\mathbb{E}\left[(G(x,t)-G_n(x,t))^2\right]\leq\frac{H}{\sqrt{n}}\to0,\quad\textrm{as}\quad n\to\infty,
\end{equation*}
for some constant $H$.
If $\lambda=0$ then the above result holds for $t$ less than any fixed finite time $T$. 

\end{theorem}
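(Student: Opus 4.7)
The proof follows a standard Dynkin-martingale and fluid-limit strategy. For each fixed $x\in[0,1]$, define the test function $f_x(\bs{w_n})=\sum_{k=1}^n x^kw_{n,k}$, so that $G_n(x,t)=f_x(\bs{w_n}(t))$. Dynkin's formula then writes
\[
G_n(x,t)=x+\int_0^t\mathcal{A}_nf_x(\bs{w_n}(s))\,ds+M_n(x,t),
\]
where $M_n(x,\cdot)$ is a square-integrable martingale whose predictable bracket is $\int_0^t(\mathcal{A}_nf_x^2-2f_x\mathcal{A}_nf_x)(\bs{w_n}(s))\,ds$. My plan is (i) to identify $\mathcal{A}_nf_x(\bs{w_n})$ with the right-hand side of (\ref{hatG}) evaluated at $G_n$, up to a uniformly small remainder $R_n$; (ii) to control $M_n$ in $L^2$ via its bracket; and (iii) to compare $G_n$ with $G$ by a Gronwall argument, exploiting dissipation from the fragmentation drift to obtain uniformity in time when $\lambda>0$. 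The constants in the estimates will not depend on $x$, so uniformity in $x\in[0,1]$ comes for free.

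For step (i), the fragmentation piece of $\mathcal{A}_nf_x(\bs{w_n})$ simplifies exactly to $\lambda_n(x-G_n(x,t))$, using the conservation $\sum_k kw_{n,k}=1$ that defines $\mathcal{S}_n$. For the coalescence piece, I would substitute $f_x$ into (\ref{gamma}) and expand each Pochhammer factor as $(nw_{n,l})_{\abs{\pi_i}}=(nw_{n,l})^{\abs{\pi_i}}+O(n^{\abs{\pi_i}-1})$; summing the leading terms over $l_1,\ldots,l_{\abs{\pi}}$ and recombining over $\pi\in\mathcal{P}_k$ collapses the expression to $\sum_{k\ge 2}\frac{\alpha(k)}{k!}(G_n(x,t)^k-kG_n(1,t)^{k-1}G_n(x,t))$, precisely the coalescence drift in (\ref{hatG}); the subleading Pochhammer corrections contribute a remainder $R_n=O(1/n)$, uniformly in $\bs{w_n}\in\mathcal{S}_n$. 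Both the convergence of the leading series and the uniform control of $R_n$ rely on the growth hypothesis (\ref{alpha}), which guarantees $\sum_k\alpha(k)k^c/k!<\infty$ for every fixed $c$, since $\exp(\gamma k\ln\ln k)=(\ln k)^{\gamma k}$ with $\gamma<1$ is dwarfed by the factorial.

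For step (ii), every jump changes $f_x$ by at most $O((k+1)/n)$ when the event involves $k$ clusters, while the total rate of $k$-ary coalescence events is $O(n)$ with combinatorial prefactor $\alpha(k)/k!$; summing against (\ref{alpha}) yields $\mathbb{E}[\langle M_n(x,\cdot)\rangle_t]\le Ct/n$, and Doob's maximal inequality gives $\sup_{s\le t}\mathbb{E}[M_n(x,s)^2]\le Ct/n$. For step (iii), subtracting (\ref{hatG}) from the Dynkin identity yields an integral equation for $\Delta_n=G_n-G$ whose drift difference is Lipschitz on $[0,1]$ with constant bounded by $\lambda+\sum_k\alpha(k)/(k-1)!<\infty$; a standard Gronwall estimate in $L^2$ then closes the finite-horizon bound, which is the $\lambda=0$ statement. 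For $\lambda>0$ the fragmentation contribution introduces an explicit $-\lambda\Delta_n$ term in the drift of $\Delta_n$; this mean-reversion, combined with a Lyapunov/Gronwall argument (or equivalently by coupling to the unique stationary regime of the limiting PDE), delivers the $L^2$ bound uniformly in $t\ge 0$. The main obstacle I expect is step (i): the bookkeeping over all $\pi\in\mathcal{P}_k$ for every $k\ge 2$ is delicate, and extracting an $O(1/n)$ remainder uniformly across the infinite sum requires precisely the subexponential growth in (\ref{alpha}) --- any faster growth of $\alpha$ would destroy summability of the subleading corrections.
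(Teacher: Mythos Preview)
Your overall architecture --- Dynkin's formula, identification of $\mathcal{A}_nf_x$ with the drift in \eqref{hatG} up to $O(1/n)$, carr\'e-du-champ control of the martingale, and a Gronwall comparison --- is exactly the paper's. Steps (i) and (ii) correspond to the paper's Lemmas \ref{lemma:A} and \ref{lemma:C}, and your remark that the Pochhammer corrections summed over $\pi\in\mathcal{P}_k$ require precisely the growth bound \eqref{alpha} is on target.

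The gap is in step (iii), specifically the uniform-in-$t$ part. A bare Lipschitz bound on the drift difference yields at best
\[
\frac{d}{dt}Y_{n,x}(t)\;\le\;2(L-\lambda)\,Y_{n,x}(t)+O(1/n),\qquad L=\sum_{k\ge 2}\frac{\alpha(k)}{(k-1)!},
\]
and since generically $L>\lambda$, Gronwall explodes and the ``mean-reversion'' term $-\lambda\Delta_n$ is not by itself strong enough to deliver a bound uniform in $t$. Moreover, the drift of $G_n(x,t)$ depends on $G_n(1,t)$, so $\Delta_n(x,\cdot)$ is not autonomous; your one-variable Gronwall does not close without separate control of $G_n(1,t)-G(1,t)$.

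The paper resolves both issues with a two-pass argument you should build in. First treat $x=1$, where the system closes and the coalescence contribution to $\frac{d}{dt}Y_{n,1}$ has the right sign,
\[
(G(1,t)-G_n(1,t))\bigl(G(1,t)^k-G_n(1,t)^k\bigr)(1-k)\le 0,
\]
so it can be discarded and Gronwall with rate $-2\lambda$ gives $Y_{n,1}(t)=O(1/n)$ uniformly in $t$. Second, for general $x$, a careful telescoping rearrangement shows the coalescence terms in $\frac{d}{dt}Y_{n,x}$ are bounded not by a multiple of $Y_{n,x}$ (which would reinstate $L$) but by a constant times $\mathbb{E}\lvert G(1,t)-G_n(1,t)\rvert\le\sqrt{Y_{n,1}(t)}=O(n^{-1/2})$. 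This produces $\frac{d}{dt}Y_{n,x}\le -2\lambda Y_{n,x}+O(n^{-1/2})$ and hence the stated rate $H/\sqrt{n}$ uniform in $t$; the square-root loss is the price of passing from $x=1$ to general $x$ via Cauchy--Schwarz, and explains why the theorem claims $n^{-1/2}$ rather than the $n^{-1}$ your martingale bracket estimate alone would suggest.
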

\begin{remark}
It should be noted that condition \eqref{alpha} is a sufficient technical condition, but may not be minimal. In addition, it is not known if $n^{-1/2}$ is the optimal rate of convergence.
\end{remark}
Unpacking the limiting generating function $G(x,t)$ allows us to write Smoluchowski-type differential equations for the (rescaled) number of clusters of a given size. 
\begin{corollary}\label{smol}
Under the assumptions of theorem \ref{thm1}, we have $L^2$ convergence $w_{n,j}(t)\to w_j(t)$, obeying
\begin{equation}
\begin{split}
w_j(0)&=\delta_{j,1},\\
  \frac{d}{d t}w_1(t)&=\lambda(1-w_1(t))-w_1(t)\sum_{k=2}^\infty\frac{\alpha(k)}{(k-1)!}\left(\sum_{l=1}^\infty w_l(t)\right)^{k-1},\\
\frac{d}{d t}w_j(t)&=-\lambda w_j(t)-w_j(t)\sum_{k=2}^\infty\frac{\alpha(k)}{(k-1)!}\left(\sum_{l=1}^\infty w_l(t)\right)^{k-1}\!\!+\sum_{k=2}^j\frac{\alpha(k)}{k!}\!\sum_{\substack{l_1,\ldots,l_k\\ l_1+\cdots+l_k=j}}w_{l_1}(t)\cdots w_{l_k}(t).
\end{split}\label{wj}
\end{equation}
for $j\geq2$. Furthermore, the limiting cluster size distribution may be obtained via $p_k(t)=w_{k}(t)/\sum_{l=1}^\infty w_l(t)$.
\end{corollary}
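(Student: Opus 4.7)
The plan is to derive both assertions of the corollary from Theorem~\ref{thm1} by expanding the PDE \eqref{hatG} as a formal power series in $x$ to produce the ODE system, and then transferring the $L^2$-convergence of the generating function to the individual Taylor coefficients.

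I would begin by defining $w_j(t):=[x^j]G(x,t)$, so that $G(x,t)=\sum_{j\geq1}x^j w_j(t)$. The deterministic mass-conservation identity $\sum_k k\,w_{n,k}(t)=1$ (each of the $n$ particles belongs to exactly one cluster) together with non-negativity of the coefficients passes to the limit via Fatou's lemma, giving $\sum_j j\,w_j(t)\leq 1$, and hence $w_j(t)\leq 1/j$. Substituting the power series into \eqref{hatG} and matching coefficients of $x^j$ on both sides then produces \eqref{wj}: the convolution $[x^j]G^k=\sum_{l_1+\cdots+l_k=j}w_{l_1}\cdots w_{l_k}$ (empty for $k>j$) supplies the gain term, while $[x^j](kG(1,t)^{k-1}G(x,t))=k(\sum_l w_l(t))^{k-1}w_j(t)$ supplies the loss term; the initial condition $w_j(0)=\delta_{j,1}$ is read off $G(x,0)=x$. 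Interchanging the sum over $k$ with coefficient extraction is justified by the growth condition \eqref{alpha} combined with $\sum_l w_l(t)\leq 1$.

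The main technical obstacle is the $L^2$-convergence of $w_{n,j}$, because Taylor coefficients are not continuous functionals of a function in the sup-norm on $[0,1]$. I would exploit the deterministic bounds $w_{n,k}\leq 1/k$ and $w_k\leq 1/k$ to control the tail: for any $x\in(0,1)$ and truncation $J$,
\begin{equation*}
\Bigl|G_n(x,t)-\sum_{k=1}^{J}x^k w_{n,k}(t)\Bigr|\leq \sum_{k>J}\frac{x^k}{k}\leq \frac{x^{J+1}}{1-x},
\end{equation*}
and likewise for $G$. Choosing $J$ distinct points $0<x_1<\cdots<x_J<1$ and inverting the resulting Vandermonde-type system expresses each $w_{n,j}(t)$, $j\leq J$, as a fixed linear combination of the values $G_n(x_i,t)$ up to a deterministic tail error of order $\max_i x_i^{J+1}/(1-x_i)$, and similarly for $w_j(t)$. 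Since Theorem~\ref{thm1} provides $L^2$-convergence $G_n(x_i,t)\to G(x_i,t)$ at each fixed point $x_i$ (uniformly in $t$ when $\lambda>0$), the linear combination converges in $L^2$, and the tail error can be made arbitrarily small by choosing $J$ large with the $x_i$ bounded away from $1$. An $\varepsilon$-argument then yields $L^2$-convergence $w_{n,j}(t)\to w_j(t)$ for each fixed $j$.

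Finally, the cluster-size distribution claim follows from the representation $p_{n,k}(t)=w_{n,k}(t)/G_n(1,t)$ obtained by combining \eqref{p_def} and \eqref{bigG}. Theorem~\ref{thm1} at $x=1$ delivers $G_n(1,t)\to G(1,t)=\sum_l w_l(t)$ in $L^2$, and this limit is bounded below by $w_1(t)>0$ (since $w_1(0)=1$ and the ODE keeps it positive whenever $\lambda>0$). Combined with the $L^2$-convergence of the numerator and the deterministic upper bounds $w_{n,k}(t)\leq 1/k$, $G_n(1,t)\leq 1$, the ratio $p_{n,k}(t)$ converges in $L^2$ to $w_k(t)/\sum_l w_l(t)$, completing the proof.
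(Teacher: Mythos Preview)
Your derivation of the ODE system \eqref{wj} by substituting $G(x,t)=\sum_j x^j w_j(t)$ into \eqref{hatG} and matching coefficients of $x^j$ is exactly what the paper does; indeed, that is the \emph{entire} content of the paper's proof of the corollary. The paper does not separately justify the $L^2$-convergence of the individual coefficients $w_{n,j}(t)$, nor the passage to $p_k(t)$; it treats these as immediate consequences of Theorem~\ref{thm1}.

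Your proposal therefore goes further than the paper in supplying an actual argument for coefficientwise convergence. The Vandermonde idea is sound, but as written the phrase ``choosing $J$ large with the $x_i$ bounded away from $1$'' hides a genuine issue: the row-sums of the Vandermonde inverse can grow rapidly in $J$, potentially swamping the $x_i^{J+1}$ tail decay. A clean way to close this is to keep $J=j$ fixed and instead shrink the nodes toward zero: with $x_i=i\rho$ for $i=1,\dots,j$, one has $(V^{-1})_{ji}=\rho^{-j}(W^{-1})_{ji}$ where $W_{il}=i^l$ is a fixed integer matrix, while the tail bound is $x_i^{j+1}=(i\rho)^{j+1}$, so the product scales like $\rho$ and can be made as small as desired. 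With that refinement your $\varepsilon$-argument goes through and yields a more complete proof than the paper's.
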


Our second main result deals with how the limiting process behaves in long times when fragmentation events are rare. To that end, we define the stationary cluster size distribution $$p_k:=\lim_{t\to\infty}\lim_{n\to\infty}p_{n,k}(t)\,,$$
where $p_{n,k}(t)$ is defined in (\ref{p_def}) and convergence in the large $n$ limit should be understood in terms of Theorem \ref{thm1}.

\begin{theorem}\label{thm2}
If $\alpha$ satisfies  \eqref{alpha} and $m$ is the smallest integer such that $\alpha(m)>0$, then the stationary cluster size distribution exists and is unique. In particular, the limit of this stationary distribution as $\lambda\to0$ exists and obeys
\begin{equation*}
  \lim_{\lambda\searrow0}p_k=\begin{cases}\frac{1}{k}\left(\frac{m-1}{m}\right)^k\left(\frac{1}{m}\right)^{\frac{k-1}{m-1}}\left(\bfrac{m\left(\frac{k-1}{m-1}\right)}{\frac{k-1}{m-1}}\right)&\mbox{if } m-1\text{ divides }k-1\\
0 &\mbox{otherwise.}\end{cases}
\label{pdist}
\end{equation*}
In particular $\lim_{\lambda\searrow0}p_k\sim k^{-3/2}$, if $m-1$ divides $k-1$, where $f\sim g$ means $f(k)/g(k)\to c$ for some positive constant $c$, as $k\to\infty$, regardless of $\alpha$.

\end{theorem}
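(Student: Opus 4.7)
The plan is to work entirely at the level of generating functions, reducing the problem to an algebraic functional equation and then extracting coefficients by Lagrange inversion.

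\textbf{Step 1: stationary functional equation.} By Theorem \ref{thm1}, the stationary empirical generating function $G_\lambda$ (parametrised by $\lambda>0$) must satisfy the RHS of \eqref{hatG} equal to zero. Introducing $\Phi(y):=\sum_{k\geq 2}\frac{\alpha(k)}{k!}y^k$, this reads
\begin{equation*}
\lambda(x-G_\lambda(x))\;=\;\Phi'(N_\lambda)\,G_\lambda(x)\;-\;\Phi(G_\lambda(x)),\qquad N_\lambda:=G_\lambda(1).
\end{equation*}
I would first establish existence and uniqueness of $G_\lambda$ (as an analytic function on $[0,1]$ with $G_\lambda(0)=0$, $G_\lambda$ monotone and concave) by a fixed-point argument on the contraction
$G\mapsto(\lambda x+\Phi(G))/(\lambda+\Phi'(N))$, using \eqref{alpha} to ensure $\Phi$ is entire and hence the rearrangements make sense. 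Setting $x=1$ gives the scalar equation $\lambda(1-N_\lambda)=N_\lambda\Phi'(N_\lambda)-\Phi(N_\lambda)$, which pins down $N_\lambda$ monotonically in $\lambda$.

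\textbf{Step 2: identifying the small-$\lambda$ scaling.} The key observation is that when fragmentation is rare, clusters are typically large, so $N_\lambda=G_\lambda(1)\to 0$ as $\lambda\searrow0$. I would prove $N_\lambda\to 0$ by monotonicity and then read off the leading order from the scalar equation: since $m$ is the smallest index with $\alpha(m)>0$, $\Phi(y)\sim\frac{\alpha(m)}{m!}y^m$ and $y\Phi'(y)-\Phi(y)\sim\frac{(m-1)\alpha(m)}{m!}y^m$, so
\begin{equation*}
N_\lambda^m\;\sim\;\frac{m!\,\lambda}{(m-1)\alpha(m)}\quad\text{as }\lambda\searrow 0.
\end{equation*}
Now rescale by writing $g_\lambda(x):=G_\lambda(x)/N_\lambda$, the probability generating function of the stationary cluster-size distribution, and divide the stationary equation by $N_\lambda\Phi'(N_\lambda)/m$. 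Using the same leading-order expansion of $\Phi$ and $\Phi'$ and the scaling of $N_\lambda$, the terms of order $k>m$ contribute $O(N_\lambda^{k-m})\to 0$, while the $\lambda N_\lambda g_\lambda(x)$ term is $O(N_\lambda\lambda/N_\lambda^{m-1})=O(\lambda^{2/m})$. In the limit this yields the algebraic functional equation
\begin{equation*}
(m-1)\,x\;=\;m\,g(x)\;-\;g(x)^m,\qquad g(0)=0,\ g(1)=1,
\end{equation*}
for $g(x)=\lim_{\lambda\searrow0}g_\lambda(x)$.

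\textbf{Step 3: Lagrange inversion.} Let $\psi(y)=(my-y^m)/(m-1)$ so $x=\psi(g(x))$, and set $\phi(y)=y/\psi(y)=(m-1)/(m-y^{m-1})$. Lagrange inversion gives
\begin{equation*}
p_k\;=\;[x^k]g(x)\;=\;\frac{1}{k}[y^{k-1}]\phi(y)^k\;=\;\frac{(m-1)^k}{k\,m^k}[y^{k-1}]\bigl(1-y^{m-1}/m\bigr)^{-k}.
\end{equation*}
Expanding by the binomial series, the coefficient of $y^{k-1}$ is nonzero only when $(m-1)\mid(k-1)$, in which case, writing $j=(k-1)/(m-1)$, it equals $m^{-j}\binom{k+j-1}{j}=m^{-j}\binom{mj}{j}$. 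This gives exactly the closed form in the statement.

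\textbf{Step 4: asymptotics.} Finally, apply Stirling to $\binom{mj}{j}$, giving $\binom{mj}{j}\sim\sqrt{m/(2\pi(m-1)j)}\cdot m^{mj}/(m-1)^{(m-1)j}$. Substituting, the factors of $m^{\pm mj}$ and $(m-1)^{\pm (m-1)j}$ cancel exactly, leaving $p_k\sim\frac{m-1}{m^{3/2}\sqrt{2\pi}}k^{-3/2}$ along the arithmetic progression $\{k:(m-1)\mid(k-1)\}$, independent of any feature of $\alpha$ other than $m$.

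The main obstacle is \textbf{Step 2}: rigorously justifying the passage to the limit, in particular controlling the tail $\sum_{k>m}\frac{\alpha(k)}{k!}G_\lambda(x)^k$ uniformly in $x\in[0,1]$ as $\lambda\searrow0$. This is where the hypothesis \eqref{alpha} is needed, to guarantee that $\Phi$ has infinite radius of convergence and that all manipulations and limit exchanges above are justified.
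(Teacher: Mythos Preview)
Your Steps 2--4 are essentially the paper's own argument: derive the stationary equation, show $G_\lambda(1)\sim c\,\lambda^{1/m}$ by inverting the scalar relation at $x=1$, rescale to obtain the universal algebraic equation $m\,g-g^m=(m-1)x$, and extract coefficients by Lagrange inversion via the negative-binomial expansion of $(1-y^{m-1}/m)^{-k}$. The paper packages the two inversions through a Puiseux-series theorem rather than the Lagrange formula you quote, but the computations coincide line for line; your Stirling step is in fact more explicit than the paper's.

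The one place where your plan diverges, and where there is a genuine gap, is Step~1. Because $p_k$ is defined in the paper as $\lim_{t\to\infty}\lim_{n\to\infty}p_{n,k}(t)$, ``existence of the stationary distribution'' means that the trajectory $G(x,t)$ of \eqref{hatG} actually converges as $t\to\infty$, not merely that the fixed-point equation has a unique solution. Your contraction map $G\mapsto(\lambda x+\Phi(G))/(\lambda+\Phi'(N_\lambda))$ (with $N_\lambda$ first pinned down by the scalar equation) does give existence and uniqueness of the fixed point, but says nothing about convergence of the time-dependent solution. The paper handles this dynamically: it first shows $G(1,t)\to G_\lambda(1)$ by a one-dimensional monotonicity argument, then observes that $(G(1,t),G(x,t))$ is a planar autonomous system with a unique equilibrium and invokes Poincar\'e--Bendixson to rule out limit cycles (the $\omega$-limit set is forced onto the line $\{G_\lambda(1)\}\times\mathbb{R}$, hence cannot be a closed orbit). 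You would need to supply either this argument or an equivalent one---for instance, noting that once $G(1,t)$ has effectively settled, the residual equation for $G(x,t)$ is scalar with a strictly decreasing right-hand side, which forces convergence.
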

\begin{figure}
\begin{center}
\includegraphics[width=0.35\textwidth, trim=0 -40 -20 0]{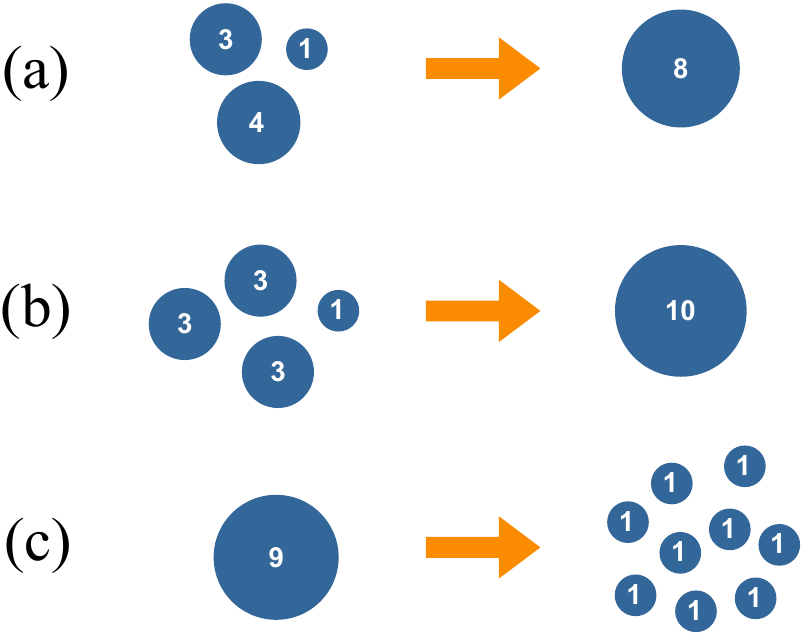}\includegraphics[width=0.4\textwidth, trim=0 10 0 0]{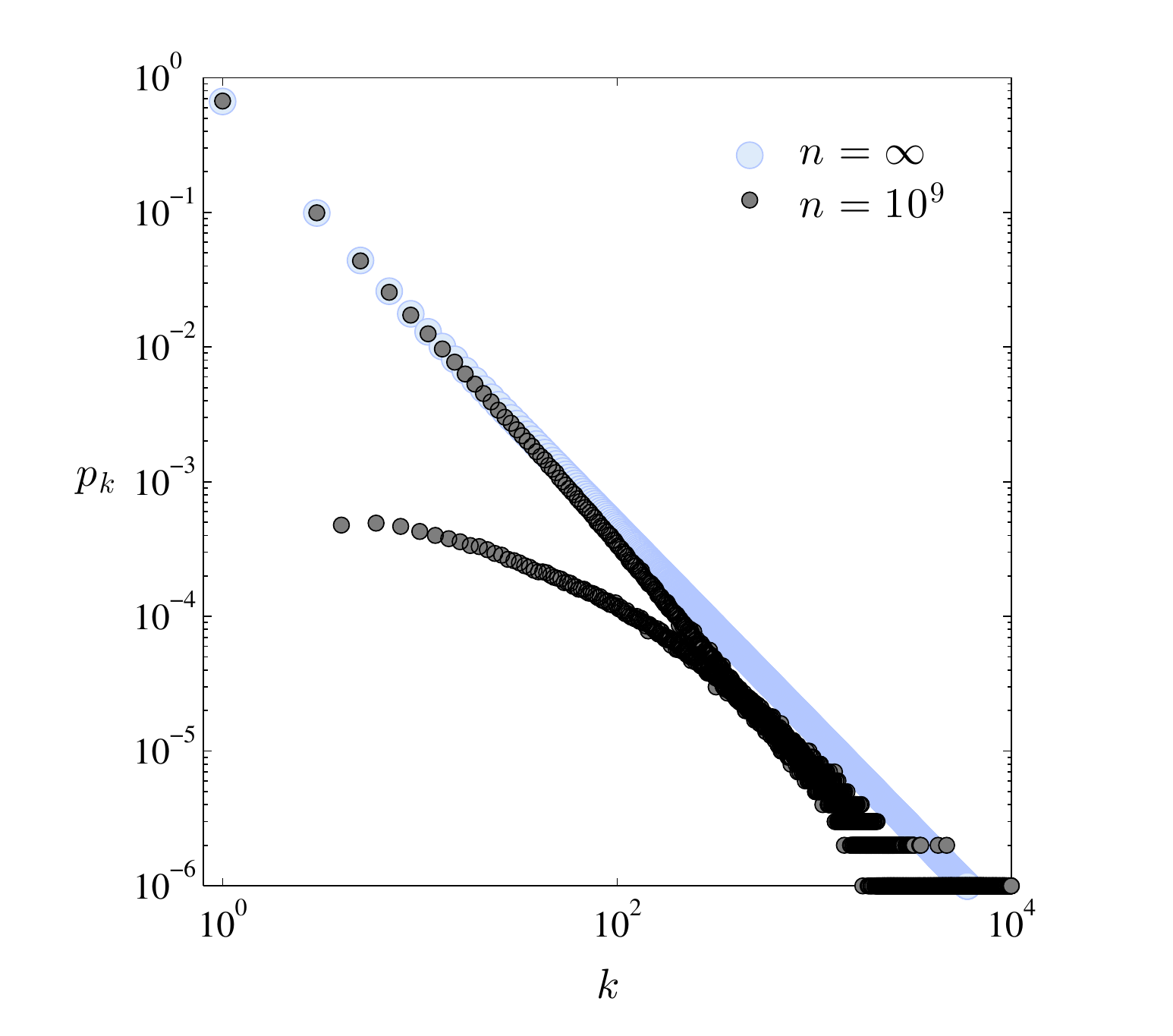}
\end{center}
\caption{Example of a fragmentation-coalescence process with fragmentation rate $\lambda=10^{-7}$ and coalescence rates $\alpha(3)=1$, $\alpha(4)=2$ and $\alpha(k)=0$ otherwise. Left: illustration of possible events (a) coalescence of three clusters, (b) coalescence of four clusters, (c) fragmentation of a cluster. Right: comparison between a single simulation run with $n=10^9$ particles, stopped at time $T=10^6$ (black) and the $n\to\infty$ limit given by Theorem \ref{thm2} (pale blue). The higher points correspond to the odd sized blocks, the lower ones correspond to even sized blocks which are asymptotically vanishing in the limit $n\to\infty$, $\lambda\to0$.}
\label{fig1}
\end{figure}
The power-law tail of cluster size distribution is reminiscent of critical behaviour occurring at the gellation time in some pure coalescent processes \cite{Aldous1999}, although here it is not a transient phenomenon but rather is approached in the long time limit. This is the essential characteristic of self-organised criticality, however, there is an unusual complication here that the result depends on the strict order of the $n\to\infty$ and $\lambda\to0$ limits; taking $\lambda_n\to0$ we recover a pure coalescent process that does not posses this characteristic power-law. 

While it is quite typical for critical processes to exhibit universality in the scaling exponents, our result says something stronger that in fact the limiting cluster size distribution is almost completely independent of the coagulation rates, depending only on the identity of the first non-zero rate. A surprising consequence is that if, for example, the model coalesces clusters in groups of three and four (but not pairs) then in the large $n$ and small $\lambda$ limit we will see no clusters of even size whatsoever in the stationary distribution. Figure~\ref{fig1} shows an example of this phenomenon for the model with rates $\alpha(k)=\delta_{k,3}+2\delta_{k,4}\,.$ The model has the apparently paradoxical feature that clusters of even size are vanishingly rare, despite the fact that some two-thirds of clusters are singletons, and $\alpha(4)>\alpha(3)$. 

We give here a brief heuristic as to why this is the case. The coalescence rates are scaled so that when the number of clusters is of order $n$ the total coalescence rate of events involving $k$ blocks is of the same order for all $k$. However, Theorem \ref{thm2} states as $\lambda\to0$ that the stationary cluster size distribution follows a power law with exponent 3/2, which suggests that the average cluster has size of order $n^{1/2}$. Hence, the number of clusters falls below order $n$. This causes coalescence events of the smallest number of clusters, $m$, to occur much more frequently than all others and so eventually we only see clusters that could have arised from coalescing $m$ clusters. These are clusters of size $k$ where $m-1$ divides $k-1$. In the above example where $m=3$, this results in the distribution only giving mass to odd-sized blocks.

\subsection{Examples of related work}
Without fragmentation one is left with a pure coalescent process \cite{Aldous1999}. The prototypical example of this class has pairs of clusters coalescing at unit rate. In this case, the $w_{n,k}(t)$ converge in probability to solutions to the Smoluchowski coagulation equations \cite{Smoluchowski1916} with unit rate kernel:
\begin{equation*}
\frac{d}{dt}w_k(t)=\frac{1}{2}\sum_{j=1}^{k-1}w_j(t)w_{k-j}(t)-w_k(t)\sum_{j=1}^{\infty}w_j(t),\quad\quad\text{$k\geq1$}.
\end{equation*}
These equations can be solved to show
\begin{equation*}
w_k(t)=\left(1+\frac{t}{2}\right)^{-2}\left(\frac{t}{2+t}\right)^{k-1},\qquad k\geq 1,\quad t\geq 0,
\end{equation*}
and so the proportion of clusters of size $k$ decays exponentially in $k$. This behaviour is typical of pure coalescence processes.

A simple modification of these dynamics is to sized-biased fragmentation-coalescence processes, a particular example of which (again with only pairs coalescing) has been studied in the context of terrorist networks \cite{Bohorquez2009}. A thorough analysis of this model was undertaken by R\'ath and T\'oth in \cite{RathToth2009}. For a constant fragmentation rate $\lambda$ it was found that the $w_{n,k}(t)$,  converge in probability as $n\to\infty$ to $w_k(t)$ solving a set of differential equations similar to those of Smoluchowski:
\begin{gather*}
  \frac{d}{dt}w_k(t)=\frac{1}{2}\sum_{j=1}^{k-1}j(k-j)w_{j}(t)w_{k-j}(t)-kw_{k}(t)-\lambda kw_{k}(t),\quad \text{$k\geq2$},\\
\frac{d}{dt}w_1(t)=-w_{1}(t)+\lambda\sum_{j=2}^{\infty}j^2w_{j}(t).
\end{gather*}
And so it was shown in the long time stationary limit, as $t\to\infty$, that 
\begin{equation*}
w_k\sim \exp\left(k\log\left(1-\frac{\lambda^2}{(1+\lambda)^2}\right)\right)k^{-5/2}.
\end{equation*}
As in our model a power-law tail emerges in the limit $\lambda\to0$, this time with exponent $5/2$. Interestingly in this case the critical behaviour survives the joint limit $\lambda_n\to0$ as $n\to\infty$; the main effort of \cite{RathToth2009} was focused on this scaling regime.

The critical behaviour of the stationary cluster size distribution, found in Theorem \ref{thm2}, has also been seen in a different form of fragmentation-coalescence process studied by Bressaud and Fournier \cite{BressaudFournier2014}. This was a mean-field approximation to a one-dimensional forest-fire model on $\mathbb{Z}$. Edges (each of weight one) are glued together if the site between them is occupied, which happens at rate 1, and clusters of $k$ edges have all sites removed at rate $(k-1)/n$. A mean-field approximation to this model, ignoring correlations, follows
\begin{gather*}
\frac{d}{dt} w^{(n)}_1(t)=-2w^{(n)}_1(t)+\frac{1}{n}\sum_{k\geq2}k(k-1)w^{(n)}_k(t),\\
\frac{d}{dt} w^{(n)}_k(t)=-(2+(k-1)/n)w^{(n)}_k(t)+\left(\sum_{l\geq1}w^{(n)}_l(t)\right)^{-1}\sum_{i=1}^{k-1}w^{(n)}_i(t)w^{(n)}_{k-i}(t), \quad k\geq2.
\end{gather*}
Then, they found that the stationary cluster size distribution converges weakly to $(p_k)_{k\geq1}$ as $n\to\infty$ (`fragmentation' rate converges to zero) where 
\begin{equation*}
p_k=\frac{2}{4^kk}\binom{2k-2}{k-1}
\end{equation*}
which matches the distribution in Theorem \ref{thm2} in the case where $m=2$.

Finally we mention an important separate class of fragmentation-coalescence processes. The models discussed so far all concern large but finite particle systems, but it is possible to define (non-size-biased) fragmentation and coalescence processes on the partitions of $\mathbb{N}$, as introduced in \cite{BerestyckiJ2004}. The class of processes we work with here can be seen as a natural finite counterpart to these models. 

\section{Thermodynamic limit}
We will focus our proof on the case of fixed fragmentation rate $\lambda_n\equiv\lambda\in(0,\infty)$; the extension to the joint limit differs only in a few places.  Besides standard generating function technology, our methods rely on Gronwall's Inequality \cite{Verhulst1990} which we reproduce below for convenience.

\begin{lemma}[Gronwall's Inequality]\label{gron}
Suppose $f:[0,\infty)\to\mathbb{R}$ is differentiable and satisfies the following differential inequality
\begin{equation*}
\frac{d}{dt}f(t)\leq af(t)+b,\qquad t> 0,
\end{equation*}
where $a,b\in\mathbb{R}$. Then, we have that for all $t\geq0$
\begin{equation*}
f(t)\leq \begin{cases} f(0)e^{at}+\frac{b}{a}\left(e^{at}-1\right) &\mbox{if } a\neq0\\ f(0)+bt &\mbox{if } a=0.\end{cases}
\end{equation*}
\end{lemma}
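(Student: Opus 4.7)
The plan is to prove Gronwall's inequality by the standard integrating-factor trick, which reduces the differential inequality to a statement about a monotone quantity that can be integrated directly.

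First I would treat the case $a\neq 0$. Define the auxiliary function $g(t):=f(t)e^{-at}$. Differentiating gives
\begin{equation*}
g'(t) = \bigl(f'(t)-af(t)\bigr)e^{-at} \leq b\,e^{-at},
\end{equation*}
where the inequality uses the hypothesis $f'(t)\leq af(t)+b$. The right-hand side is integrable, so integrating from $0$ to $t$ yields
\begin{equation*}
f(t)e^{-at} - f(0) \;=\; g(t)-g(0) \;\leq\; \int_0^t b\,e^{-as}\,ds \;=\; \frac{b}{a}\bigl(1-e^{-at}\bigr).
\end{equation*}
Multiplying through by $e^{at}$ gives exactly $f(t)\leq f(0)e^{at}+\frac{b}{a}(e^{at}-1)$, which is the claimed bound.

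For the degenerate case $a=0$, the hypothesis reads $f'(t)\leq b$, and direct integration over $[0,t]$ yields $f(t)\leq f(0)+bt$, matching the second branch of the conclusion. Since both branches have been established, the lemma follows.

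There is essentially no obstacle: the only subtlety is choosing the correct integrating factor $e^{-at}$ so that the inequality $f'-af\leq b$ converts into an inequality on $(fe^{-at})'$ that can be integrated pointwise. Differentiability of $f$ on $[0,\infty)$ is exactly what is needed to justify the integration step, and no further regularity or sign assumptions on $a$, $b$, or $f$ are required.
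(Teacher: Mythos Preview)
Your proof is correct and is the standard integrating-factor argument. The paper itself does not prove this lemma at all---it merely quotes the statement with a citation to \cite{Verhulst1990}---so there is nothing further to compare.
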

The idea of the proof of Theorem \ref{thm1} is to bound the derivative of the expected squared difference between $G(x,t)$ and $G_n(x,t)$ in such a way so as we may apply lemma \ref{gron}, where, here, $a$ will be negative and $b=b_n$ will decay to zero as $n\to\infty$. To achieve this, we will need to prove that the derivative exists and certain error quantities (specified in the next section) converge to zero as $n$ tends to infinity. 

\subsection{Mean-Field Calculation}
We first undertake a mean-field calculation to determine a viable candidate for $\lim_{n\to\infty}G_n$ under self-averaging.
\begin{lemma}\label{lemma:A}
For $f_0(x,\bs{w_n}):=\sum_{k=1}^nx^kw_{n,k}$, we have
\begin{equation*}
\mathcal{A}_nf_0(x,\bs{w_n})
=\lambda(x-f_0(x,\bs{w_n}))+\sum_{k=2}^n\frac{\alpha(k)}{k!}(f_0(x,\bs{w_n})^k-kf_0(1,\bs{w_n})^{k-1}f_0(x,\bs{w_n}))+\beta_n(x,\bs{w_n}),
\end{equation*}
where
\begin{equation*}
\sup_{\bs{w_n}\in\mathcal{S}_n}\abs{\beta_n(x,\bs{w_n})}\leq \frac{A}{n},
\end{equation*}
where $A$ is a constant independent of $n$ and $x$.
\end{lemma}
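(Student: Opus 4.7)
The plan is to exploit the linearity of $f_0(x,\bs{w_n})=\sum_k x^k w_{n,k}$ in $\bs{w_n}$, so that every generator increment $f_0(\bs{w_n}+\text{shift})-f_0(\bs{w_n})$ reduces to a simple expression in the shift. For the fragmentation part of $\mathcal{A}_n$, the shift $-\frac{1}{n}\bs{e_{n,k}}+\frac{k}{n}\bs{e_{n,1}}$ produces an increment $\frac{1}{n}(kx-x^k)$, and summing $\lambda n w_{n,k}$ times this against $\sum_k k w_{n,k}=1$ yields exactly $\lambda(x-f_0(x,\bs{w_n}))$. Fragmentation therefore contributes nothing to $\beta_n$, and the entire error comes from the coalescence half.

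For the coalescence term, I would use a combinatorial bijection to flatten the partition sum. The tuples $(\pi,l_1,\ldots,l_{|\pi|})$ appearing in (\ref{infgen}), with the $l_i$ distinct across blocks (as is implicit in the partition enumeration of size multisets), are in bijection with ordered $k$-tuples $(l'_1,\ldots,l'_k)$ of sizes via $l'_j=l_i$ for $j\in\pi_i$. Under this bijection the $f_0$-increment becomes $\frac{1}{n}(x^{l'_1+\cdots+l'_k}-\sum_j x^{l'_j})$. If one replaces the Pochhammer product $\prod_i (nw_{n,l_i})_{|\pi_i|}$ with its leading power $\prod_i (nw_{n,l_i})^{|\pi_i|}$, this product collapses to $\prod_j n w_{n,l'_j}$ and the combined sum simplifies to $\frac{1}{n}[(nG_n(x))^k - k(nG_n(x))(nG_n(1))^{k-1}]$. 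Multiplying by $\alpha(k)/(k!\,n^{k-1})$ and summing over $k$ then gives precisely the claimed mean-field expression in the lemma.

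It remains to show that the Pochhammer-versus-power error is $O(1/n)$. Under the same bijection, this error corresponds to the sum over ordered $k$-tuples of \emph{clusters} (not sizes) using at least one repeated cluster, of $\frac{1}{n}$ times the $f_0$-increment; the number of such repeating tuples is exactly $(nG_n(1))^k-(nG_n(1))_k$. Splitting on whether $nG_n(1)\geq k$ or not, this count is bounded above by $\binom{k}{2}n^{k-1}+k^k$. Combined with the uniform estimate $(k+1)/n$ on the increment, each $k$-contribution to $|\beta_n|$ is at most $O(\alpha(k)k^3/(k!\,n))+O(\alpha(k)k^{k+1}/(k!\,n^k))$.

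The main obstacle is establishing uniform summability of these two pieces. By Stirling, $k^3/k!$ decays super-exponentially, so combined with (\ref{alpha}) the first sum behaves like $\exp(k[\gamma\ln\ln k-\ln k])$ up to polynomial prefactors, which is overwhelmingly summable (indeed much milder assumptions on $\alpha$ would suffice here). The second sum, noting that $k$ is restricted to $k\leq n$ by the generator, is dominated by its $k=2$ term and contributes $O(1/n^2)$. Together these yield $|\beta_n|\leq A/n$ uniformly in $x\in[0,1]$ and $\bs{w_n}\in\mathcal{S}_n$. A pleasing feature of this approach is that the Bell-number growth of $|\mathcal{P}_k|$ never enters: the bijection absorbs the partition sum into the ordered-tuple sum before any estimation, which is why condition (\ref{alpha}) enters only loosely in this lemma.
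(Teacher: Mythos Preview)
Your argument is correct and takes a cleaner route than the paper's. Both proofs dispatch the fragmentation contribution identically and isolate the error as the discrepancy between Pochhammer counts and pure powers in the coalescence term. The paper then bounds this discrepancy \emph{partition by partition}: it expands each factor $\prod_i\big[(nw_{n,l_i})_{|\pi_i|}/n^{|\pi_i|}-w_{n,l_i}^{|\pi_i|}\big]$ crudely, sums over the $l_i$'s, and is left with the cardinality $B_k=|\mathcal{P}_k|$; the Bell-number asymptotic $B_k\sim\exp(k\ln k-k\ln\ln k)$ is exactly what makes the threshold $\gamma<1$ in~\eqref{alpha} appear. You instead invoke the bijection between partition-indexed data and ordered cluster tuples \emph{before} estimating, so the entire error becomes a sum over tuples containing a repeated cluster; bounding that count by $M^k-(M)_k\le\binom{k}{2}n^{k-1}+k^k$ (with $M=nG_n(1)$) sidesteps Bell numbers altogether. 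The practical payoff is that your main sum needs only $\sum_k\alpha(k)k^3/k!<\infty$, which holds under far milder growth than~\eqref{alpha}; your closing remark that condition~\eqref{alpha} ``enters only loosely'' is therefore accurate and in fact exposes that the paper's reliance on~\eqref{alpha} in this particular lemma is an artefact of its partition-wise bookkeeping rather than an intrinsic requirement. One small caveat: saying the second sum is ``dominated by its $k=2$ term'' tacitly assumes $\alpha(2)>0$; in general it is dominated by the $k=m$ term (with $m$ the first nonzero index) and is $O(n^{-m})$, still well within the required $O(1/n)$.
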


\begin{proof}
We use the infinitesimal generator from \eqref{infgen} in the special case where $f=f_0$ using a slight abuse of notation. Hence

\begin{align*}
\mathcal{A}_nf_0(x,\bs{w_n})&=-\lambda\sum_{k=1}^nx^kw_{n,k}+\lambda\sum_{k=1}^{n}kxw_{n,k}\\
&\qquad+\sum_{k=2}^n\frac{\alpha(k)}{k!n^k}\sum_{\pi\in\mathcal{P}_k}\sum_{l_1=1}^n\cdots\sum_{l_{\abs{\pi}}=1}^n\left(\prod_{i=1}^{\abs{\pi}}x^{\abs{\pi_i} l_i}-\sum_{i=1}^{\abs{\pi}}\abs{\pi_i} x^{l_i}\right)\prod_{i=1}^{\abs{\pi}}(nw_{n,l_i})_{\abs{\pi_i}}.
\end{align*}
As $n$ grows large, we claim that the dominant contribution from the final product inside the third term on the right-hand side is simply $n^kw_{n,l_1}\cdots w_{n,l_k}$. If $\pi=\{1,\ldots,k\}$, then this is the only term, otherwise there is a subdominant correction resulting from the fact that clusters cannot coagulate with themselves. Labelling this correction as $\beta_n(x,\bs{w_n})$, which we will later bound, we obtain
\begin{align}\label{DCT}
 \mathcal{A}_nf_0(x,\bs{w_n})&=-\lambda\sum_{k=1}^nx^kw_{n,k}+\lambda\sum_{k=1}^{n}kxw_{n,k}\nonumber\\
&\quad\quad+\sum_{k=2}^n\frac{\alpha(k)}{k!}\sum_{l_1=1}^n\cdots\sum_{l_k=1}^n\left(\prod_{i=1}^kx^{l_i}-\sum_{i=1}^kx^{l_i}\right)w_{n,l_1}\cdots w_{n,l_k}\nonumber\\
&\quad\quad+\beta_n(x,\bs{w_n})\nonumber\\
&=\lambda(x-f_0(x,\bs{w_n}))+\sum_{k=2}^n\frac{\alpha(k)}{k!}\left(f_0(x,\bs{w_n})^k-kf_0(1,\bs{w_n})^{k-1}f_0(x,\bs{w_n})\right)+\beta_n(x,\bs{w_n}).
\end{align}

It remains to show that $\beta_n(x,\bs{w_n})\leq A/n$ for some constant $A>0$. We know it is the remainder after subtracting the dominant term and so is
\begin{equation*}
\beta_n(x,\bs{w_n})=\sum_{k=2}^n\frac{\alpha(k)}{k!}\sum_{\pi\in\mathcal{P}_k}\sum_{l_1=1}^n\cdots\sum_{l_{\abs{\pi}}=1}^n\left(\prod_{i=1}^{|\pi|}x^{\abs{\pi_i}l_i}-\sum_{i=1}^{\abs{\pi}}\abs{\pi_i}x^{l_i}\right)\left(\prod_{i=1}^{\abs{\pi}}\frac{(nw_{n,l_i})_{\abs{\pi_i}}}{n^{\abs{\pi_i}}}-\prod_{i=1}^{\abs{\pi}}w_{n,l_i}^{\abs{\pi_i}}\right).
\end{equation*}
Each multinomial in the second bracket, when expanded, has at most $2^k$ terms with varying powers of $1/n$ larger than or equal to one. Also, we can see that all the terms are divisible by $w_{n,l_1}\cdots w_{n,l_{\abs{\pi}}}$ and have a coefficient that is less than or equal to $(k-1)^j$, where $j$ is the power of $1/n$ in that term. Hence, 
\begin{equation*}
  |\beta_n(x,\bs{w_n})|\leq\sum_{k=2}^n\frac{\alpha(k)}{(k-1)!}\sum_{\pi\in\mathcal{P}_k}\sum_{l_1=1}^n\cdots\sum_{l_{\abs{\pi}}=1}^n \frac{2^k (k-1)}{n}w_{n,l_1}\cdots w_{n,l_{\abs{\pi}}}.
\end{equation*}
Summing over $l_1,\ldots,l_{\abs{\pi}}$ we have that,
\begin{equation*}
|\beta_n(x,\bs{w_n})|\leq\frac{1}{n}\sum_{k=2}^n\frac{\alpha(k)2^k}{(k-2)!}\sum_{\pi\in\mathcal{P}_k}f_0(1,\bs{w_n})^{\abs{\pi}}.
\end{equation*}
Then noting that $f_0(1,\bs{w_n})^{\abs{\pi}}\leq1$ for all $\pi$, we have
\begin{equation*}
  |\beta_n(x,\bs{w_n})|\leq\frac{1}{n}\sum_{k=2}^n\frac{\alpha(k)2^k}{(k-2)!}B_k,
\end{equation*}
where $B_k$ denotes the $k^{th}$ Bell number and is the size of the set $\mathcal{P}_k$. We bound $B_k$ using a recent result from \cite{BerendTassa2010}, obtaining
\begin{equation*}
  |\beta_n(x,\bs{w_n})|\leq\frac{1}{n}\left|\sum_{k=2}^n\frac{\alpha(k)}{(k-2)!}\left(\frac{1.584k}{\ln(k+1)}\right)^k\right|.
\end{equation*}
If we look at the asymptotic behaviour of the summands for large $k$, we see that 
\begin{equation*}
  \frac{1}{(k-2)!}\left(\frac{1.584k}{\ln(k+1)}\right)^k\sim \exp\left(-k\ln\ln(k)+O(k)\right),
\end{equation*}
where $\sim$ is defined in Theorem \ref{thm2}. Thus, by assumption \eqref{alpha} on $\alpha$, the series converges as required.
\end{proof}
\vspace{0.2cm}

\subsection{Self-averaging}
With the mean-field behaviour determined, we proceed to the proof of $L^2$ convergence for sample paths. The following two lemmas will help us determine the behaviour of $G_n^2$.

\begin{lemma}\label{lemma:C}

The function $f_0(x,\bs{w_n})=\sum_{k=1}^nx^kw_{n,k}$ satisfies
\begin{equation*}
\mathcal{A}_nf_0^2(x,\bs{w_n})=2f_0(x,\bs{w_n})\mathcal{A}_nf_0(x,\bs{w_n})+\frac{\lambda x^2}{n}f_2(1,\bs{w_n})+h(x,\bs{w_n}),
\end{equation*}
where
\begin{equation*}
f_2(x,\bs{w_n}):=\sum_{k=1}^nk^2x^kw_{n,k},\quad\text{and}\quad
\sup_{\bs{w_n}\in\mathcal{S}_n}h(x,\bs{w_n})\leq\frac{E}{n},
\end{equation*}
such that $E$ is a constant independent of $n$ and $x$.
\end{lemma}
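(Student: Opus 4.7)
The key algebraic observation is that if a single jump changes $f_0(x,\bs{w_n})$ to $f_0(x,\bs{w_n})+\Delta$, then $f_0^2$ changes by $2f_0\,\Delta+\Delta^2$. Summing event rates against these increments gives
\[
\mathcal{A}_n f_0^2(x,\bs{w_n}) \;=\; 2f_0(x,\bs{w_n})\mathcal{A}_n f_0(x,\bs{w_n}) \;+\; \mathcal{Q}(x,\bs{w_n}),
\]
where $\mathcal{Q}$ is the ``carr\'e du champ'' contribution, i.e.\ the sum of (rate)$\cdot\Delta^2$ over all possible jumps. I will split $\mathcal{Q}=\mathcal{Q}_{\mathrm{frag}}+\mathcal{Q}_{\mathrm{coal}}$ and show that $\mathcal{Q}_{\mathrm{frag}}$ contributes exactly the advertised $\frac{\lambda x^2}{n}f_2(1,\bs{w_n})$ plus an $O(1/n)$ remainder, while $\mathcal{Q}_{\mathrm{coal}}$ is itself $O(1/n)$ uniformly in $\bs{w_n}\in\mathcal{S}_n$.

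For $\mathcal{Q}_{\mathrm{frag}}$, a fragmentation of a size-$k$ cluster changes $f_0(x,\cdot)$ by $\Delta_{\mathrm{frag},k}=\frac{1}{n}(kx-x^k)$ and occurs at rate $\lambda n w_{n,k}$. A direct summation yields
\[
\mathcal{Q}_{\mathrm{frag}}(x,\bs{w_n})=\frac{\lambda}{n}\sum_{k=1}^n w_{n,k}\bigl(k^2x^2-2kx^{k+1}+x^{2k}\bigr).
\]
The $k^2x^2$ term is precisely $\frac{\lambda x^2}{n}f_2(1,\bs{w_n})$, and the remaining two sums are each bounded in modulus by $\lambda/n$ uniformly on $\mathcal{S}_n$, using $\sum_k k w_{n,k}=1$ and $\sum_k w_{n,k}\leq 1$. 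Hence $\mathcal{Q}_{\mathrm{frag}}=\frac{\lambda x^2}{n}f_2(1,\bs{w_n})+O(1/n)$.

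For $\mathcal{Q}_{\mathrm{coal}}$, a coalescence event indexed by $(k,\pi,l_1,\ldots,l_{|\pi|})$ changes $f_0(x,\cdot)$ by $\Delta_{\mathrm{coal}}=\frac{1}{n}\bigl(x^{\sum|\pi_i|l_i}-\sum_i|\pi_i|x^{l_i}\bigr)$, so $|\Delta_{\mathrm{coal}}|\leq (k+1)/n$. Its rate is $\frac{\alpha(k)}{k!\,n^{k-1}}\prod_i(nw_{n,l_i})_{|\pi_i|}$; bounding the Pochhammer by $(nw_{n,l_i})^{|\pi_i|}$ and summing over $l_1,\ldots,l_{|\pi|}$ using $\sum_{l}w_{n,l}^{|\pi_i|}\leq\sum_l w_{n,l}\leq 1$, exactly as in the proof of Lemma~\ref{lemma:A}, gives a per-partition rate bound $\alpha(k)n/k!$. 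Therefore
\[
|\mathcal{Q}_{\mathrm{coal}}(x,\bs{w_n})|\;\leq\;\frac{1}{n}\sum_{k=2}^n\frac{\alpha(k)(k+1)^2\,B_k}{k!},
\]
and the series on the right converges by the Berend--Tassa bound on $B_k$ combined with hypothesis \eqref{alpha}, precisely the estimate already established in Lemma~\ref{lemma:A} (the polynomial prefactor $(k+1)^2$ is harmlessly absorbed into the $O(k)$ term of the asymptotics there). Collecting the $\mathcal{Q}_{\mathrm{frag}}$ remainder and $\mathcal{Q}_{\mathrm{coal}}$ into $h(x,\bs{w_n})$ yields $\sup_{\bs{w_n}\in\mathcal{S}_n}h(x,\bs{w_n})\leq E/n$.

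\textbf{Main obstacle.} The structural identity and the fragmentation piece are essentially bookkeeping; the genuine work is in the coalescence bound, where every $\Delta$ is individually $O(1/n)$ but the number of partitions and the growth of $\alpha(k)$ could in principle destroy summability. The crux is to recycle the combinatorial estimate from Lemma~\ref{lemma:A} — reducing the falling factorials to products of $w_{n,l_i}$, summing out the $l_i$'s, and controlling the $B_k/k!$ factor via \eqref{alpha} — to absorb the extra polynomial $(k+1)^2$ coming from squaring $\Delta_{\mathrm{coal}}$.
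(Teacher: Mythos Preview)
Your proof is correct and follows essentially the same route as the paper: both exploit the identity $\mathcal{A}_n f_0^2 = 2f_0\,\mathcal{A}_n f_0 + (\text{sum of rate}\cdot\Delta^2)$, split the quadratic remainder into fragmentation and coalescence pieces, and identify the $\frac{\lambda x^2}{n}f_2(1,\bs{w_n})$ term as the $k^2x^2$ part of $\mathcal{Q}_{\mathrm{frag}}$. Your treatment of $\mathcal{Q}_{\mathrm{coal}}$ is slightly cleaner than the paper's---you bound $|\Delta_{\mathrm{coal}}|\leq (k+1)/n$ directly and then control the total rate, whereas the paper expands the square $\bigl(\prod_i x^{|\pi_i|l_i}-\sum_i|\pi_i|x^{l_i}\bigr)^2$ into several $f_0$-type sums before estimating---but both reduce to the same $B_k/k!$-type series controlled via \eqref{alpha}.
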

\begin{proof}
We plug the function $f_0^2$ into the infinitesimal generator \eqref{infgen}
\begin{align*}
\mathcal{A}_nf_0^2(x,\bs{w_n})&=\lambda_n n\sum_{k=1}^n\left(\left(f_0(x,\bs{w_n})+\frac{kx-x^k}{n}\right)^2-f_0^2(x,t)\right)w_{n,k}\nonumber\\
&\qquad +\sum_{k=2}^n\frac{\alpha(k)}{k!n^{k-1}}\sum_{\pi\in\mathcal{P}_k}\sum_{l_1=1}^n\cdots\sum_{l_{\abs{\pi}}=1}^n\gamma_n(f_0^2,\pi,\bs{w_n}).
\end{align*}
where $\gamma$ is defined in \eqref{gamma}. By expanding out the brackets we see that

\begin{equation*}
\mathcal{A}_nf_0^2(x,\bs{w_n})=2f_0(x,\bs{w_n})\mathcal{A}_nf_0(x,\bs{w_n})+\frac{\lambda x^2}{n}f_2(1,\bs{w_n})+h(x,\bs{w_n}),
\end{equation*}
where
\begin{align*}
  h(x,\bs{w_n})&=\frac{\lambda}{n}\sum_{k=1}^n\left(x^{2k}-2kx^{k+1}\right)w_{n,k}\\
&\qquad +\frac{1}{n}\sum_{k=2}^n\frac{\alpha(k)}{k!}\sum_{\pi\in\mathcal{P}_k}\sum_{l_1=1}^n\cdots\sum_{l_{\abs{\pi}}=1}^n\left(\prod_{i=1}^{\abs{\pi}}x^{\abs{\pi_i} l_i}-\sum_{i=1}^{\abs{\pi}}\abs{\pi_i} x^{l_i}\right)^2\prod_{i=1}^{\abs{\pi}}\frac{(nw_{n,l_i})_{\abs{\pi_i}}}{n^{\abs{\pi_i}}}\\
&\leq -\frac{2\lambda}{n}x^2+\frac{\lambda}{n}f_0(x^2,\bs{w_n})+\frac{1}{n}\sum_{k=2}^n\frac{\alpha(k)}{k!}f_0(x^2,\bs{w_n})^k\\
&\qquad-\frac{2}{n}\sum_{k=2}^n\frac{\alpha(k)}{k!}kf_0(x^2,\bs{w_n})f_0(x,\bs{w_n})^{k-1}+\frac{1}{n}\sum_{k=2}^n\frac{\alpha(k)}{k!}kf_0(x^2,\bs{w_n})f_0(1,\bs{w_n})^{k-1}\\
&\qquad+\frac{1}{n}\sum_{k=2}^n\frac{\alpha(k)}{(k-2)!}f_0(x,\bs{w_n})^2f_0(1,\bs{w_n})^{k-2}+\frac{1}{n^2}\sum_{k=2}^n\frac{\alpha(k)k}{(k-2)!}\left(\frac{1.584k}{\ln(k+1)}\right)^k.
\end{align*}
where the bound was attained by using Lemma \ref{lemma:B} and the same techniques as in the proof of lemma \ref{lemma:A}. Now, using the bound $f_0(x^j,\bs{w_n})^l\leq1$ for all $j,l\geq1$, and \eqref{B}, we obtain
\begin{align*}
  h(x,\bs{w_n})&\leq\frac{1}{n}\left(\lambda +\sum_{k=2}^{n}\alpha(k)\left(\frac{1}{k!}+\frac{3}{(k-1)!}+\frac{2}{(k-2)!}\right)+\frac{B}{n}\right)\leq\frac{E}{n},
\end{align*}
where $E$ is independent of $n$ and $x$, as required.
\end{proof}
Now we need to bound the expected value of the function $f_2(1,\bs{w_n}(t))$.
\begin{lemma}\label{lemma:B}
Define the function $C_n:[0,\infty)\to\mathbb{R}$ as follows
\begin{equation*}
C_n(t)=\mathbb{E}\left[f_2(1,\bs{w_n}(t))\right],
\end{equation*}
for $n\geq 1$ and $t\geq 0$.
Then we have that
\begin{equation*}
\sup_{n\in\mathbb{N}}C_n(t)\leq D,
\end{equation*}
where $D$ is a constant independent of $t$.
\end{lemma}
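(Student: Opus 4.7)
The plan is to bound the derivative $\frac{d}{dt} C_n(t) = \mathbb{E}[\mathcal{A}_n g(\bs{w_n}(t))]$, with $g(\bs{w_n}) := f_2(1, \bs{w_n}) = \sum_{k=1}^n k^2 w_{n,k}$, by a linear-in-$C_n(t)$ expression with strictly negative drift coefficient, and then close everything with Gronwall's Inequality (Lemma \ref{gron}). The fragmentation contribution to $\mathcal{A}_n g$ is immediate: a cluster of size $k$ fragments at rate $\lambda$ and changes $g$ by $(k-k^2)/n$, so using the mass-conservation identity $\sum_k k w_{n,k}=1$ one finds
\[
\mathcal{A}_n^{\text{frag}} g(\bs{w_n}) \;=\; \lambda \sum_{k=1}^n (k-k^2) w_{n,k} \;=\; \lambda\bigl(1 - g(\bs{w_n})\bigr),
\]
which supplies precisely the $-\lambda g$ drift that will make Gronwall bite.

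For the coalescence term, the change in $g$ produced by a $k$-way merger indexed by a partition $\pi\in\mathcal{P}_k$ with representative sizes $l_1,\ldots,l_{|\pi|}$ is
\[
\frac{1}{n}\Bigl[\bigl(\textstyle\sum_i |\pi_i|l_i\bigr)^{\!2} - \sum_i |\pi_i| l_i^2\Bigr] = \frac{1}{n}\Bigl[\sum_i |\pi_i|(|\pi_i|-1) l_i^2 + 2\sum_{i<j}|\pi_i||\pi_j| l_i l_j\Bigr]\;\geq\;0.
\]
The $l_i^2$ piece is dangerous: if it were not controlled it would couple $\mathcal{A}_n^{\text{coal}} g$ back to $g$ itself and destroy any hope of a uniform bound. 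The key observation is that its coefficient $|\pi_i|(|\pi_i|-1)$ vanishes unless $|\pi_i|\geq 2$, and in that case one can use the a priori bound $l w_{n,l}\leq1$ twice over: $w_{n,l_i}^{|\pi_i|}\leq w_{n,l_i}^2\leq w_{n,l_i}/l_i$, whence $\sum_{l_i} l_i^2 w_{n,l_i}^{|\pi_i|}\leq 1$. The cross terms are handled analogously via $\sum_{l_i} l_i w_{n,l_i}^{|\pi_i|}\leq 1$, and the remaining summations by $\sum_{l_j} w_{n,l_j}^{|\pi_j|}\leq 1$. Using the crude envelope $(nw_{n,l_i})_{|\pi_i|}\leq (nw_{n,l_i})^{|\pi_i|}$ in $\gamma_n$ so that all inequalities point the right way, each partition $\pi$ contributes at most $k(k-1)$, giving
\[
\mathcal{A}_n^{\text{coal}} g(\bs{w_n}) \;\leq\; \sum_{k=2}^\infty \frac{\alpha(k)\,B_k}{(k-2)!} \;=:\; K \;<\; \infty,
\]
where $B_k$ is the $k$th Bell number and convergence follows from assumption \eqref{alpha} together with the Bell-number asymptotics already used in the proof of Lemma \ref{lemma:A}.

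Combining, Dynkin's formula yields $\frac{d}{dt} C_n(t) \leq -\lambda C_n(t) + (\lambda + K)$ for all $n$ and $t\geq 0$. Since the singleton initial condition gives $C_n(0)=1$, Gronwall's Inequality produces
\[
C_n(t) \;\leq\; e^{-\lambda t} + \frac{\lambda+K}{\lambda}\bigl(1-e^{-\lambda t}\bigr) \;\leq\; D := 1 + \frac{K}{\lambda},
\]
which is the claim. The principal obstacle is the identification and taming of the quadratic-in-$l$ term in the coalescence change; once that observation is made, the remainder amounts to careful bookkeeping of the sums over partitions, block sizes, and representative cluster sizes, closely paralleling the estimates in the proof of Lemma \ref{lemma:A}.
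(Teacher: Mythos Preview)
Your proof is correct and follows the same overall architecture as the paper's: Dynkin's formula for $\frac{d}{dt}C_n(t)$, the exact fragmentation contribution $\lambda(1-C_n(t))$ supplying the negative drift, a uniform-in-$n$ constant bound on the coalescence contribution, and then Gronwall's Inequality. The only substantive difference is in how the coalescence term is controlled. The paper separates out the singleton partition (where only the cross terms $\sum_{i<j}l_il_j$ survive and mass conservation $\sum_l l\,w_{n,l}=1$ gives $\binom{k}{2}G_n(1,t)^{k-2}$) and then handles all other partitions as an $O(1/n)$ correction in the spirit of the $\beta_n$ estimate from Lemma~\ref{lemma:A}. You instead bound all partitions at once: the crude envelope $(nw_{n,l})_{|\pi_i|}\leq (nw_{n,l})^{|\pi_i|}$ together with the pointwise bound $l\,w_{n,l}\leq 1$ (so $l^2 w_{n,l}^2\leq l\,w_{n,l}$) disposes of the $l_i^2$ diagonal terms, yielding the uniform constant $K=\sum_k \alpha(k)B_k/(k-2)!$. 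Your route is a little more direct and sidesteps the leading/error split, at the price of the larger constant carrying the Bell-number factor in place of the paper's $\sum_k \alpha(k)/(k-2)! + B/n$; for the purpose of the lemma the two bounds are equivalent.
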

\vspace{0.2cm}
\begin{proof}
We will use Gronwall's Inequality. We look at the derivative of $C_n$, using Dynkin's formula we see that
\begin{equation*}
\frac{d}{dt}C_n(t)=\mathbb{E}\left[\mathcal{A}_nf_2(1,\bs{w_n}(t))\right],
\end{equation*}
and hence, using the infinitesimal generator \eqref{infgen},
\begin{align*}
\frac{d}{dt}C_n(t)&=\mathbb{E}\Bigg[-\lambda\sum_{k=1}^n(k^2-k)w_{n,k}(t)\\
& \qquad+\sum_{k=2}^n\frac{\alpha(k)}{k!}\sum_{\pi\in\mathcal{P}_k}\sum_{l_1=1}^n\cdots\sum_{l_{\abs{\pi}}=1}^n\left[\left(\sum_{i=1}^{\abs{\pi}}\abs{\pi_i} l_i\right)^2-\sum_{i=1}^{\abs{\pi}}(\abs{\pi_i} l_i)^2\right]\prod_{i=1}^{\abs{\pi}}\frac{(nw_{n,l_i}(t))_{\abs{\pi_i}}}{n^{\abs{\pi_i}}}\Bigg].
\end{align*}
which, using a similar argument to that used when bounding $\beta_n(x,t)$, gives that

\begin{align*}
\frac{d}{dt}C_n(t)&\leq\lambda-\lambda C_n(t)\\ 
&\qquad +2\mathbb{E}\left[\sum_{k=2}^n\frac{\alpha(k)}{k!}\sum_{l_1=1}^n\cdots\sum_{l_k=1}^n\left(\sum_{i<j}l_il_j\right)w_{n,l_1}(t)\cdots w_{n,l_k}(t)\right]\\
&\qquad +2\mathbb{E}\left[\sum_{k=2}^n \frac{\alpha(k)}{k!}\frac{2^k(k-1)}{n}\sum_{\pi\in\mathcal{P}_k}\sum_{l_1=1}^n\cdots\sum_{l_{\abs{\pi}}=1}^n\left(\sum_{i<j}\abs{\pi_i}\abs{\pi_j} l_i l_j\right)w_{n,l_1}(t)\cdots w_{n,l_{\abs{\pi}}}(t)\right]\\
&\leq \lambda -\lambda C_n(t)+2\mathbb{E}\left[\sum_{k=2}^n\frac{\alpha(k)}{k!}{\binom{k}{2}}G_n(1,t)^{k-2}+\frac{1}{n}\sum_{k=2}^n \frac{\alpha(k)k}{k!}\sum_{\pi\in\mathcal{P}_k}\binom{k}{2}G_n(1,t)^{\abs{\pi}-2}\right]\\
&\leq \lambda -\lambda C_n(t)+\sum_{k=2}^n\frac{\alpha(k)}{(k-2)!}+\frac{B}{n},
\end{align*}
where 
\begin{equation}\label{B}
B=\sum_{k=2}^\infty \frac{\alpha(k)k}{(k-2)!}\left(\frac{1.584k}{\ln(k+1)}\right)^k.
\end{equation}
Thus,

\begin{equation*}
\frac{d}{dt}C_n(t)\leq D_1-\lambda C_n(t)
\end{equation*}
where $D_1$ is some constant not dependent on $n$ or $t$, so applying Gronwall's inequality gives the result as required.

\end{proof}
With the help of the above three lemmas we are now ready to prove Theorem \ref{thm1}.
\begin{proof}[Proof of Theorem 1]\renewcommand{\qedsymbol}{}

Recall from \eqref{hatG} that we write $G(x,t)$ for the solution of the differential equation 
\begin{gather*}
  \frac{\partial G}{\partial t}(x,t)=\lambda(x-G(x,t))+\sum_{k=2}^{\infty}\frac{\alpha(k)}{k!}\left(G(x,t)^k-kG(1,t)^{k-1}G(x,t)\right)\\
G(x,0)=x.
\end{gather*}
The existence and uniqueness of $G(x,t)$ is straightforward to establish via the Picard-Lindeh\"of Theorem, as the right hand side is uniformly Lipschitz in $G$, $x$ and $t$. Specifically, starting with the case $x=1$, we have
\begin{equation}
\frac{\partial G(1,t)}{\partial t}=F_1(G(1,t))\,,\label{F1}
\end{equation}
where
\begin{equation}
F_1(g)=\lambda(1-g)+\sum_{k\geq 2}\frac{\alpha(k)}{k!}(1-k)g^k\,.
\end{equation}
The derivative of $F_1$ is uniformly bounded on $g\in[0,1]$ since
\begin{equation}
F'_1(g)=-\lambda-\sum_{k\geq 2}\frac{\alpha(k)}{(k-2)!}g^{k-1}\,,
\end{equation}
and thus
\begin{equation}
\sup_{g\in[0,1]}\left|F_1'(g)\right|=\lambda+\sum_{k\geq 2}\frac{\alpha(k)}{(k-2)!}=L_1<\infty.
\end{equation}
Said another way, $F_1$ is uniformly Lipschitz with constant $L_1$, and thus \eqref{F1} has a unique solution. A very similar argument using furthermore the well-posedness of \eqref{F1} applies to the case of general $x\in(0,1]$, where we have
\begin{equation}
\frac{\partial G(x,t)}{\partial t}=F_x(G(x,t),t)\,,
\end{equation}
for
\begin{equation}
F_x(g,t)=\lambda(x-g)+\sum_{k\geq 2}\frac{\alpha(k)}{k!}\big(g^k-kG(1,t)^{k-1}g\big)\,.
\label{Fx}
\end{equation}
The derivative is again bounded:
\begin{equation}
F'_x(g,t)=-\lambda+\sum_{k\geq 2}\frac{\alpha(k)}{(k-1)!}\big(g^{k-1}-G(1,t)^{k-1}\big)\,,
\end{equation}
and since $G(1,t)\in[0,1]$ we obtain 
\begin{equation}
\sup_{g\in[0,1],t\geq0}\left|F_x'(g,t)\right|=\lambda+2\sum_{k\geq 2}\frac{\alpha(k)}{(k-1)!}=L<\infty.
\end{equation}
As this constant is independent of $x,g$ and $t$, global existence and uniqueness follows. 

Moving on now to compare to the finite system generating function $G_n(x,t)$, we start by noting that in the trivial case $x=0$, we have 
\begin{equation*}
G_n(0,t)=G(0,t)=0, \quad \forall t\geq 0\,.
\end{equation*}
Now, for strictly positive $x$ we define the function $Y_{n,x}(t):[0,\infty)\to\mathbb{R}$ as follows:
\begin{equation*}
Y_{n,x}(t)=\mathbb{E}\big[(G(x,t)-G_n(x,t))^2\big], \quad t\geq0.
\end{equation*}
We will use Gronwall's Inequality, so we look at the derivative of $Y_{n,x}(t)$ with respect to $t$, using Dynkin's formula (bearing in mind that pre-limit in $n$, the system is finite) we see that
\begin{align}
\frac{dY_{n,x}}{dt}(t)&=\mathbb{E}\left[\mathcal{A}_n(G(x,t)-G_n(x,t))^2\right]\nonumber\\
&=2G(x,t)\frac{\partial}{\partial t}G(x,t)+\mathbb{E}\left[\mathcal{A}_nG_n^2(x,t)\right]
-2G(x,t)\mathbb{E}\left[\mathcal{A}_nG_n(x,t)\right]\nonumber\\&\qquad-2\mathbb{E}[G_n(x,t)]\frac{\partial}{\partial t}G(x,t).\label{yn}
\end{align}
Noting that $G_n(x,t)=f_0(x,\bs{w_n}(t))$, we may use lemma \ref{lemma:A}, lemma \ref{lemma:C}, and the definition of $G$ in \eqref{hatG} to see that
\begin{align}
\frac{d}{dt}Y_{n,x}(t)&=2G(x,t)\bigg(\lambda(x-G(x,t))+\sum_{k=2}^{\infty}\frac{\alpha(k)}{k!}\left(G(x,t)^k-kG(1,t)^{k-1}G(x,t)\right)\bigg)\nonumber\\
&\quad\quad+2\mathbb{E}\bigg[G_n(x,t)\bigg(\lambda(x-G_n(x,t))+\beta_n(x,\bs{w_n}(t))\nonumber\\
&\qquad\qquad+\sum_{k=2}^n\frac{\alpha(k)}{k!}(G_n(x,t)^k-kG_n(1,t)^{k-1}G_n(x,t))\bigg)\bigg]\nonumber\\
&\quad\quad-2G(x,t)\mathbb{E}\bigg[\lambda(x-G_n(x,t))+\beta_n(x,\bs{w_n}(t))\nonumber\\
&\quad\quad\quad\quad+\sum_{k=2}^n\frac{\alpha(k)}{k!}(G_n(x,t)^k-kG_n(1,t)^{k-1}G_n(x,t))\bigg]\nonumber\\
&\quad\quad-2\mathbb{E}[G_n(x,t)]\bigg(\lambda(x-G(x,t))+\sum_{k=2}^{\infty}\frac{\alpha(k)}{k!}\left(G(x,t)^k-kG(1,t)^{k-1}G(x,t)\right)\bigg)\nonumber\\
&\quad\quad+\frac{\lambda x^2}{n}C_n(t)+\mathbb{E}\left[h(x,\bs{w_n}(t))\right]\nonumber\\
&=-2\lambda Y_{n,x}(t)+2\mathbb{E}[(G_n(x,t)-G(x,t))\beta_n(x,\bs{w_n}(t))]+\frac{\lambda x^2}{n}C_n(t)+\mathbb{E}[h(x,\bs{w_n}(t))]\nonumber\\
&\quad\quad +2\sum_{k=2}^n\frac{\alpha(k)}{k!}\mathbb{E}\bigg[(G(x,t)-G_n(x,t))\Big(G(x,t)^k-G_n(x,t)^k)\nonumber\\
&\quad\quad\quad\quad-k(G(1,t)^{k-1}G(x,t)-G_n(1,t)^{k-1}G_n(x,t))\Big)\bigg]\nonumber\\
&\quad\quad+2\sum_{k=n+1}^{\infty}\frac{\alpha(k)}{k!}\mathbb{E}\left[(G(x,t)-G_n(x,t))(G(x,t)^k-kG(1,t)^{k-1}G(x,t)\right]
.\label{Y}
\end{align}
\end{proof}
Note that the right hand side of (\ref{Y}) depends on $G_n(1,t)$. For the boundary case $x=1$, however, we have a closed expression in $G_n(1,t)$. The plan is thus to first show the theorem holds for $x=1$ and use this to complete the proof for general $x$. Define $X_n(t):=Y_{n,1}(t)$.
\begin{lemma}\label{lemma:X}
We have
\begin{equation}
\sup_{t\geq0}X_n(t)\leq\frac{H_n}{2\lambda},
\end{equation}
where
\begin{equation*}
H_n:=\frac{1}{n}\left(2A+D\lambda+E+\sum_{k=2}^{\infty}\frac{2\alpha(k)}{(k-2)!}\right),
\end{equation*}
In particular, $H_n\to0$ as $n\to\infty$ because of assumption \eqref{alpha}.
\end{lemma}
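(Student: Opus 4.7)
The plan is to specialize the master identity \eqref{Y} to $x=1$, exploit a sign observation to discard the bulk of the coalescence contribution, control the residual error terms via the three preceding lemmas, and close with Gronwall's inequality.

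First, I would note that at $x=1$ the expression $G(x,t)^k-kG(1,t)^{k-1}G(x,t)$ collapses to $-(k-1)G(1,t)^k$, and the analogous collapse holds with $G_n$ in place of $G$. Consequently the finite coalescence sum appearing in \eqref{Y} becomes
\begin{equation*}
-2\sum_{k=2}^{n}\frac{\alpha(k)(k-1)}{k!}\,\mathbb{E}\bigl[(G(1,t)-G_n(1,t))\bigl(G(1,t)^k-G_n(1,t)^k\bigr)\bigr],
\end{equation*}
with a structurally identical expression for the tail. The key observation is that, since $G(1,t),G_n(1,t)\in[0,1]$, the factorization $a^k-b^k=(a-b)\sum_{j=0}^{k-1}a^{k-1-j}b^{j}$ forces $(a-b)(a^k-b^k)\geq 0$ whenever $a,b\geq 0$. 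Thus the entire finite sum is nonpositive and can be discarded in an upper bound. This is the step that makes the argument work: without it one would only have a Lipschitz-type bound on the coalescence contribution, producing exponential blow-up rather than a stationary estimate.

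Next I would bound the remaining error terms. The $\beta_n$ contribution is at most $2A/n$ by Lemma \ref{lemma:A} combined with the trivial $|G(1,t)-G_n(1,t)|\leq 1$. Lemma \ref{lemma:B} controls $\lambda C_n(t)/n\leq \lambda D/n$, and Lemma \ref{lemma:C} gives $\mathbb{E}[h(1,\bs{w_n}(t))]\leq E/n$. The tail coalescence term, again using $|G-G_n|\leq 1$ and $G(1,t)^k\leq 1$, is absolutely bounded by $2\sum_{k=n+1}^{\infty}\alpha(k)(k-1)/k!=2\sum_{k=n+1}^{\infty}\alpha(k)/(k(k-2)!)$; pulling out the factor $1/k\leq 1/n$ converts this into $(2/n)\sum_{k=2}^{\infty}\alpha(k)/(k-2)!$, and the latter series converges by the same asymptotic estimate used in the proof of Lemma \ref{lemma:A} under assumption \eqref{alpha}.

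Assembling these estimates yields the differential inequality $\frac{d}{dt}X_n(t)\leq -2\lambda X_n(t)+H_n$. The standard initial condition of $n$ singletons gives $G_n(1,0)=1=G(1,0)$, and hence $X_n(0)=0$. Applying Lemma \ref{gron} with $a=-2\lambda<0$ and $b=H_n$ produces $X_n(t)\leq (H_n/(2\lambda))(1-e^{-2\lambda t})\leq H_n/(2\lambda)$ uniformly in $t\geq 0$, which is the stated bound; and $H_n\to 0$ is immediate from convergence of the defining series. The main obstacle, beyond identifying the sign of $(a-b)(a^k-b^k)$, is purely bookkeeping.
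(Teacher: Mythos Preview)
Your proof is correct and follows essentially the same approach as the paper: specialize \eqref{Y} to $x=1$, use the sign of $(G(1,t)-G_n(1,t))(G(1,t)^k-G_n(1,t)^k)$ to discard the finite coalescence sum, bound the remaining terms via Lemmas \ref{lemma:A}--\ref{lemma:C}, and close with Gronwall. The only cosmetic difference is in the tail estimate, where the paper bounds $\sum_{k>n}2\alpha(k)/(k-1)!$ and absorbs it into $H_n$ via $1/(k-1)!\leq 1/(n(k-2)!)$, whereas you extract the factor $1/k\leq 1/n$ directly; both lead to the same $H_n$.
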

\vspace{0.2cm}
\begin{proof}
Substituting $x=1$ into \eqref{Y} we see that
\begin{align*}
\frac{dX_n}{dt}(t)&=-2\lambda X_n(t)+\frac{\lambda}{n}C_n(t)+\mathbb{E}[h(1,\bs{w_n}(t))]+2\mathbb{E}[(G_n(1,t)-G(1,t))\beta_n(1,\bs{w_n}(t))]\nonumber\\
&\quad\quad+2\sum_{k=2}^n\frac{\alpha(k)}{k!}\mathbb{E}\left[(G(1,t)-G_n(1,t))(G(1,t)^k-G_n(1,t)^k)\right](1-k)\nonumber\\
&\quad\quad+2\sum_{k=n+1}^{\infty}\frac{\alpha(k)}{k!}\mathbb{E}[(G(1,t)-G_n(1,t))G(1,t)^{k}(1-k).
\end{align*}
Now,  $(G(1,t)-G_n(1,t))(G(1,t)^k-G_n(1,t)^k)\geq0$, which means the second line above is negative. Hence, together with lemma \ref{lemma:C} and lemma \ref{lemma:B}, we have
\begin{equation}
\frac{dX_n}{dt}(t)\leq -2\lambda X_n(t) + \frac{1}{n}(D\lambda+E+2A)+\sum_{k=n+1}^\infty\frac{2\alpha(k)}{(k-1)!}\leq -2\lambda X_n(t)+H_n,\label{Xchange}
\end{equation}
so that applying Gronwall's inequality gives the result as required.
\end{proof}
We can now use this bound to complete the proof of Theorem \ref{thm1}.
\begin{proof}[Proof of Theorem 1 (cont.)]
Continuing from \eqref{Y}, we have that
\begin{align}
\frac{d}{dt}Y_{n,x}(t)&\leq-2\lambda Y_{n,x}(t) +2\sum_{k=2}^\infty S_k+H_n,\label{Y2}\end{align}
where 
\begin{align*}
S_k&=\frac{\alpha(k)}{k!}\mathbb{E}\bigg[G(x,t)^{k-1}\left(G(x,t)^2-G(x,t)G_n(x,t)\right)+G_n(x,t)^{k-1}\left(G_n(x,t)^2-G(x,t)G_n(x,t)\right)\nonumber\\
&\quad\quad-kG(1,t)^{k-1}\left(G(x,t)^2-G(x,t)G_n(x,t)\right)-kG_n(1,t)^{k-1}\left(G_n(x,t)^2-G(x,t)G_n(x,t)\right)\bigg].
\end{align*}
We will apply lemma \ref{lemma:X} to bound the sum of the $S_k$. First, it is necessary to bound the sum in terms of $\mathbb{E}[|G(1,t)-G_n(1,t)|]$, and remove any terms involving just $G(x,t)$ and $G_n(x,t)$. To do this we create terms that contain the positive term $(G(x,t)-G_n(x,t))^2$, so that if we pre-multiply them by something negative we can discard it for an upper bound. For example, looking at the first term above we can do the following by adding a zero
\begin{align*}
G(x,t)^{k-1}\mathbb{E}\left[G(x,t)^2-G(x,t)G_n(x,t)\right]&=G(x,t)^{k-1}\mathbb{E}\left[G(x,t)^2-G(x,t)G_n(x,t)\right]\nonumber\\
&\qquad-G(x,t)^{k-1}\mathbb{E}\left[G(x,t)G_n(x,t)-G_n(x,t)^2\right]\nonumber\\
&\qquad+G(x,t)^{k-1}\mathbb{E}\left[G(x,t)G_n(x,t)-G_n(x,t)^2\right]\nonumber\\
&=G(x,t)^{k-1}\mathbb{E}\left[\left(G(x,t)-G_n(x,t)\right)^2\right]\nonumber\\
& \qquad+G(x,t)^{k-1}\mathbb{E}\left[G(x,t)G_n(x,t)-G_n(x,t)^2\right]\\
&=G(x,t)^{k-1}\mathbb{E}\left[\left(G(x,t)-G_n(x,t)\right)^2\right]\nonumber\\
& \qquad+\mathbb{E}\left[G(x,t)^{k-2}G_n(x,t)\left(G(x,t)^2-G(x,t)G_n(x,t)\right)\right].
\end{align*}
In creating the square term above, we get a similar term to what we started with but with the exponent of $G(x,t)$ decreased by one and the exponent of $G_n(x,t)$ increased by one. We repeat this process until the exponent of $G_n(x,t)$ is $k-1$. We then subtract and add back in the same terms but with $x$ replaced by 1. Specifically,
\begin{align*}
S_k&=\frac{\alpha(k)}{k!}\mathbb{E}\bigg[G(x,t)^{k-1}(G(x,t)-G_n(x,t))^2+G(x,t)^{k-2}G_n(x,t)(G(x,t)-G_n(x,t)^2)\nonumber\\
&\quad\quad\quad\quad+\ldots+G(x,t)G_n(x,t)^{k-2}(G(x,t)-G_n(x,t))^2+G_n(x,t)^{k-1}(G(x,t)-G_n(x,t))^2\bigg]\nonumber\\
&\quad\quad-\frac{\alpha(k)}{k!}\mathbb{E}\bigg[G(1,t)^{k-1}(G(x,t)-G_n(x,t))^2+G(1,t)^{k-2}G_n(1,t)(G(x,t)-G_n(x,t)^2)\nonumber\\
&\qquad\qquad+\ldots+G(1,t)G_n(1,t)^{k-2}(G(x,t)-G_n(x,t))^2+G_n(1,t)^{k-1}(G(x,t)-G_n(x,t))^2\bigg]\nonumber\\
&\quad\quad+\frac{\alpha(k)}{k!}\mathbb{E}\bigg[G(1,t)^{k-1}(G(x,t)-G_n(x,t))^2+G(1,t)^{k-2}G_n(1,t)(G(x,t)-G_n(x,t))^2\nonumber\\
&\qquad\qquad+\ldots+G(1,t)G_n(1,t)^{k-2}(G(x,t)-G_n(x,t))^2+G_n(1,t)^{k-1}(G(x,t)-G_n(x,t))^2\bigg]\nonumber\\
&\quad\quad-\frac{\alpha(k)}{k!}\bigg(kG(1,t)^{k-1}G(x,t)^2-kG(1,t)^{k-1}G(x,t)\mathbb{E}[G_n(x,t)]\nonumber\\
&\qquad\qquad+k\mathbb{E}\left[G_n(1,t)^{k-1}G_n(x,t)^2\right]-kG(x,t)\mathbb{E}\left[G_n(1,t)^{k-1}G_n(x,t)\right]\bigg),
\end{align*}
noting that the first four lines combined give something negative. As $G(x,t)\leq G(1,t)$ and $G_n(x,t)\leq G_n(1,t)$, we have
\begin{align*}
S_k&\leq\frac{\alpha(k)}{k!}\mathbb{E}\left[G(1,t)^{k-1}(G(x,t)^2-2G(x,t)G_n(x,t)+G_n(x,t)^2)\right]\nonumber\\
&\quad\quad+\frac{\alpha(k)}{k!}\mathbb{E}\left[G(1,t)^{k-2}G_n(1,t)(G(x,t)^2-2G(x,t)G_n(x,t)+G_n(x,t)^2)\right]\nonumber\\
&\quad\quad+\ldots+\frac{\alpha(k)}{k!}\mathbb{E}\left[G(1,t)G_n(1,t)^{k-2}(G(x,t)^2-2G(x,t)G_n(x,t)+G_n(x,t)^2)\right]\nonumber\\
&\quad\quad+\frac{\alpha(k)}{k!}\mathbb{E}\left[G_n(1,t)^{k-1}(G(x,t)^2-2G(x,t)G_n(x,t)+G_n(x,t)^2)\right]\nonumber\\
&\quad\quad-\frac{\alpha(k)}{k!}\bigg(kG(1,t)^{k-1}G(x,t)^2-kG(1,t)^{k-1}G(x,t)\mathbb{E}[G_n(x,t)]\nonumber\\
&\qquad\qquad+k\mathbb{E}\left[G_n(1,t)^{k-1}G_n(x,t)^2\right]-kG(x,t)\mathbb{E}\left[G_n(1,t)^{k-1}G_n(x,t)\right]\bigg).
\end{align*}
We gather terms in $G(x,t)^2$, $G_n(x,t)^2$ and $G(x,t)G_n(x,t)$, each multiplied by $|G(1,t)^j-G_n(1,t)^j|$ for some $j$. Here we have taken absolute values in order to not worry about signs. We do this by matching each of the terms from the first four lines of the above with the corresponding term from the last two lines. Doing this we see that
\begin{align*}
S_k&\leq\frac{\alpha(k)}{k!}G(x,t)^2\mathbb{E}\Big[G(1,t)^{k-2}\left|G(1,t)-G_n(1,t)\right|+\ldots+\left|G(1,t)^{k-1}-G_n(1,t)^{k-1}\right|\Big]\nonumber\\
&\quad\quad+\frac{\alpha(k)}{k!}\mathbb{E}\Big[G_n(x,t)^2\Big(\left|G(1,t)^{k-1}-G_n(1,t)^{k-1}\right|+\ldots+G_n(1,t)^{k-2}\left|G(1,t)-G_n(1,t)\right|\Big)\Big]\nonumber\\
&\quad\quad+\frac{\alpha(k)}{k!}G(x,t)\mathbb{E}\Big[G_n(x,t)\Big(G(1,t)^{k-2}\left|G(1,t)-G_n(1,t)\right|+\ldots\nonumber\\
&\qquad\qquad\qquad\qquad\qquad\qquad\qquad+G(1,t)\left|G(1,t)^{k-2}-G_n(1,t)^{k-2}\right|\Big)\Big]\nonumber\\
&\quad\quad+\frac{\alpha(k)}{k!}G(x,t)\mathbb{E}\Big[G_n(x,t)\Big(G_n(1,t)\left|G(1,t)^{k-2}-G_n(1,t)^{k-2}\right|+\ldots\nonumber\\
&\qquad\qquad\qquad\qquad\qquad\qquad\qquad+G_n(1,t)^{k-2}\left|G(1,t)-G_n(1,t)\right|\Big)\Big].
\end{align*}
Hence, using the fact that $G(x,t)$, $G(1,t)$, $G_n(x,t)$, $G_n(1,t)\leq 1$, we see that
\begin{align*}
S_k&\leq \frac{\alpha(k)}{k!}\mathbb{E}\Big[\left|G(1,t)^{k-1}-G_n(1,t)^{k-1}\right|+\ldots+\left|G(1,t)-G_n(1,t)\right|\Big]\nonumber\\
&\quad\quad+\frac{\alpha(k)}{k!}\mathbb{E}\left[\left(\left|G(1,t)^{k-1}-G_n(1,t)^{k-1}\right|+\ldots+\left|G(1,t)-G_n(1,t)\right|\right)\right]\nonumber\\
&\quad\quad+\frac{\alpha(k)}{k!}\mathbb{E}\left[\left|G(1,t)^{k-2}-G_n(1,t)^{k-2}\right|+\ldots+\left|G(1,t)-G_n(1,t)\right|\right]\nonumber\\
&\quad\quad+\frac{\alpha(k)}{k!}\mathbb{E}\left[\left|G(1,t)^{k-2}-G_n(1,t)^{k-2}\right|+\ldots+\left|G(1,t)-G_n(1,t)\right|\right]\\
&=2\frac{\alpha(k)}{k!}\sum_{j=1}^{k-1}\mathbb{E}\left[\left|G(1,t)^j-G_n(1,t)^j\right|\right]+2\frac{\alpha(k)}{k!}\sum_{j=1}^{k-2}\mathbb{E}\left[\left|G(1,t)^j-G_n(1,t)^j\right|\right]\\
S_k&\leq2\frac{\alpha(k)}{k!}\sum_{j=1}^{k-1}j\mathbb{E}\left[\left|G(1,t)-G_n(1,t)\right|\right]+2\frac{\alpha(k)}{k!}\sum_{j=1}^{k-2}j\mathbb{E}\left[\left|G(1,t)-G_n(1,t)\right|\right],
\end{align*}
where the above inequality uses $|y^j-z^j|\leq j|y-z|$ for $y,z\leq1$. Thus
\[
S_k\leq\frac{2\alpha(k)(k-1)}{k(k-2)!}\mathbb{E}\left[\left|G(1,t)-G_n(1,t)\right|\right]
\]
We can plug this result back into \eqref{Y2} and see that 
\begin{align*}
 \frac{d}{dt}Y_{n,x}(t)&\leq-2\lambda Y_{n,x}(t) +4\mathbb{E}\left[\left|G(1,t)-G_n(1,t)\right|\right]\sum_{k=2}^n\frac{\alpha(k)(k-1)}{k(k-2)!} + H_n.
 \end{align*}
Now using lemma \ref{lemma:X}, we have that
\begin{equation}
\frac{d}{dt}Y_{n,x}(t)\leq -2\lambda Y_{n,x}(t)+4\frac{\sqrt{H_n}}{\sqrt{\lambda}}\sum_{k=2}^n\frac{\alpha(k)(k-1)}{k(k-2)!}+H_n.\label{Ychange2}
\end{equation}
Then, we can use Gronwall's Inequality to see that
\begin{align}
Y_{n,x}(t)&\leq\left(2\frac{\sqrt{H_n}}{\lambda \sqrt{\lambda}}\sum_{k=2}^n\frac{\alpha(k)(k-1)}{k(k-2)!}+H_n\right)\left(1-e^{-2\lambda t}\right)\nonumber\\
&\leq2\frac{\sqrt{H_n}}{\lambda \sqrt{\lambda}}\sum_{k=2}^n\frac{\alpha(k)(k-1)}{k(k-2)!}+H_n\nonumber\\
&\to 0\nonumber
\end{align}
as $n\to\infty$, thanks to \eqref{alpha}. This convergence is uniform in both $x$ and $t$, as required.

\end{proof}

From Theorem 1 the Smoluchowski-type equations stated in Corollary 1 follow immendiately: 
\begin{proof}[Proof of Corollary 1]
We begin by subsituting $G(x,t)=\sum_{j}x^j w_j(t)$ into \eqref{hatG};
\begin{align*}
\frac{\partial G}{\partial t}(x,t)&=\lambda(x-G(x,t))+\sum_{k=2}^\infty \frac{\alpha(k)}{k!}(G(x,t)^k-kG(x,t)G(1,t)^{k-1})\\
&=\lambda\left(x-\sum_{j=1}^\infty x^jw_j(t)\right)+\sum_{k=2}^\infty \frac{\alpha(k)}{k!}\left(\left(\sum_{j=1}^\infty x^jw_j(t)\right)^k-k\left(\sum_{j=1}^\infty x^jw_j(t)\right)\left(\sum_{j=1}^\infty w_j(t)\right)^{k-1}\right).
\end{align*}
Rearranging the right hand side to collect powers of $x$ we obtain 
\begin{align*}
\frac{\partial G}{\partial t}(x,t)&=x\left(\lambda(1-w_1(t))-w_1(t)\sum_{k=1}^\infty \frac{\alpha(k)}{(k-1)!}\left(\sum_{j=1}^\infty w_j(t)\right)^{k-1}\right)\\
&\qquad +\sum_{j=2}^\infty x^j\Bigg(-\lambda w_j(t)-w_j(t)\sum_{k=1}^\infty \frac{\alpha(k)}{(k-1)!}\left(\sum_{l=1}^\infty w_l(t)\right)^{k-1}\\
&\qquad\qquad\qquad\qquad\qquad+\sum_{k=2}^j\frac{\alpha(k)}{k!}\!\sum_{\substack{l_1,\ldots,l_k\\ l_1+\cdots+l_k=j}}w_{l_1}(t)\cdots w_{l_k}(t)\Bigg),
\end{align*}
as required, where the final term on the right-hand side is obtained using the product formula for power series.
\end{proof}

\section{Universal Stationary Distribution}
Now that we know how the empirical generating function $G_n(x,t)$ converges in the large $n$ limit, we can extract a prediction for the stationary distribution of cluster sizes. Recalling from \eqref{littleg} that the generating function of the cluster size distribution is obtained via $$g_n(x,t)=\frac{G_n(x,t)}{G_n(1,t)}\,,$$ we reintroduce (and later prove the existence of) the quantities
\begin{align*}
&g_\lambda(x,t)=\frac{G(x,t)}{G(1,t)}, && G_\lambda(x)=\lim_{t\to\infty}G(x,t),\\
&g_\lambda(x)=\lim_{t\to\infty}g_\lambda(x,t)=\frac{G_\lambda(x)}{G_\lambda(1)}, && p_k(\lambda)=\lim_{t\to\infty}\lim_{n\to\infty}p_{n,k}(t).
\end{align*}
Note: as this section focuses on the limit as $\lambda\to0$ of the above quantities, these quantities have been labelled with $\lambda$ to make it easier to follow. Under self-averaging, the stationary cluster size distribution is found simply by determining the fixed point $G_\lambda(x)$ of the differential equation \eqref{hatG} and developing $g_\lambda(x)=G_\lambda(x)/G_\lambda(1)$ as a power series. In practice, this procedure is only tractable in the limit $\lambda\searrow0$, where we find a particular limiting distribution. 

Proof of the convergence of $G$ as $t\to \infty$ is an application of the Poincar\'{e}-Bendixson Theorem (see \cite{Teschl2012} p.~223), which states that attractors of plane autonomous systems are either fixed points, collections of fixed points (with their associated linking orbits), or limit cycles. To compute the asymptotic form of the solution in small $\lambda$ we employ the classical technique of series inversion that dates back to Newton, Lagrange, B\"urmann and Puiseux.In particular, the following theorem is a sub-case of results of those authors (see \cite{EM}, p.~183):

\begin{theorem}[Series Inversion]\label{series}
Let $f:[0,\infty)\to[0,\infty)$ be defined by the power series $f(x)=\sum_{k\geq m}a_kx^k$ for integer $m\geq 1$ and non-negative real constants $\{a_k\}$. The inverse is expressed via the Puiseux series
$$f^{-1}(y)=\sum_{k=1}^\infty b_k y^{k/m}\,,$$
where
$$b_k=\frac{1}{k!}\lim_{z\to 0}\left[\frac{d^{k-1}}{dz^{k-1}}\left(\frac{z}{f(z)^{1/m}}\right)^k\right]\,.$$
In particular, note that $b_1=1/a_1$.
\end{theorem}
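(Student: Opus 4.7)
The plan is to recognise this as an instance of the classical Lagrange--Bürmann inversion formula, suitably adapted to accommodate the fact that $f$ has a zero of multiplicity $m$ at the origin. The subtlety is that when $m>1$ the function $f$ itself is not locally invertible as an analytic map (it is locally $m$-to-one), so the first move is to regularise by extracting an $m$-th root before applying the standard machinery.

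Concretely, assuming $a_m>0$ (else there is nothing to invert), I would write $f(x)=a_m x^m(1+h(x))$ where $h(x):=\sum_{k\geq 1}(a_{m+k}/a_m)\,x^k$ vanishes at $0$. Using the binomial series, the principal branch of $(1+h(x))^{1/m}$ is analytic in a neighbourhood of $0$, so
$$u(x) := f(x)^{1/m} = a_m^{1/m}\,x\,(1+h(x))^{1/m}$$
is analytic at $0$ with $u'(0)=a_m^{1/m}\neq 0$. Equivalently, putting
$$\phi(x) := \frac{x}{f(x)^{1/m}} = a_m^{-1/m}\,(1+h(x))^{-1/m},$$
the function $\phi$ is analytic at $0$ with $\phi(0)\neq 0$, and the identity $u=x/\phi(x)$ holds near the origin.

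At this point I would invoke the classical Lagrange inversion formula in its standard form: if $u=x/\phi(x)$ with $\phi$ analytic and $\phi(0)\neq 0$, then $u\mapsto x$ is analytic on a neighbourhood of $0$ and
$$x = \sum_{k\geq 1} b_k u^k,\qquad b_k=\frac{1}{k!}\left.\frac{d^{k-1}}{dz^{k-1}}\phi(z)^k\right|_{z=0},$$
which upon resubstituting $\phi(z)^k=(z/f(z)^{1/m})^k$ produces exactly the formula for $b_k$ stated in the theorem. To translate this back into a statement about $f^{-1}(y)$, observe that the hypotheses $a_m>0$ and $a_k\geq 0$ make $f$ strictly increasing on some $[0,\delta]$, so $f^{-1}:[0,f(\delta)]\to[0,\delta]$ is a genuine non-negative inverse; the principal real root $u=y^{1/m}\geq 0$ is the unique branch consistent with $f^{-1}(0)=0$, and substitution into $x=\sum_{k\geq 1}b_k u^k$ yields $f^{-1}(y)=\sum_{k\geq 1}b_k y^{k/m}$. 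The particular value $b_1=1/a_1$ in the case $m=1$ is immediate, since then $\phi(0)=1/a_1$.

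The main obstacle, insofar as there is one, is purely bookkeeping: one must select the principal branch of $(1+h(x))^{1/m}$ at the first step, match it consistently with the principal real branch of $y^{1/m}$ at the last step, and verify that the radii of convergence of the two power series are compatible so that the substitution $u=y^{1/m}$ is legitimate on a small non-negative interval of $y$. Beyond this, the argument is a direct application of Lagrange--Bürmann inversion as found in the cited reference \cite{EM}.
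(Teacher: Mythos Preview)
The paper does not actually prove this theorem: it is stated as a classical result and attributed to Newton, Lagrange, B\"urmann and Puiseux, with a reference to \cite{EM}, p.~183. Your proposal---regularising via $u=f(x)^{1/m}$ so that $u$ is analytic with nonzero derivative at the origin, and then applying the standard Lagrange--B\"urmann formula to $u=x/\phi(x)$---is exactly the natural derivation and is correct. Your observation that the claim $b_1=1/a_1$ only makes literal sense when $m=1$ (in general $b_1=a_m^{-1/m}$) is also accurate; the paper's ``in particular'' remark is presumably intended for that case.
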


\begin{proof}[Proof of Theorem 2]

Recall that the limit generating function $G(x,t)$ obeys \eqref{hatG}.
Equivalently we may write $\partial G(x,t)/\partial t= F_x(G(x,t),t)$, where $F_x$ is the function defined in \eqref{Fx}. Since $G(1,t)\geq G(x,t)$ for all $x$, it follows that $F_x$ is monotonically decreasing in its first argument, moreover, $F_x(1,t)<0<F_x(0,t)$. Hence, in the particular case $x=1$, $G(1,t)$ has a unique fixed point , denoted $G_\lambda(1)$. As $G(1,t)\in[0,1]$ for all $t$ convergence is assured. 

For general $x$, note that $(G(1,t),G(x,t))$ is a plane autonomous system and, by the same argument as above, has a unique fixed point. From the Poincar\'{e}-Bendixson theorem, we conclude that we need only to rule out the case of periodic orbits to confirm convergence to the fixed point. But we have already shown that $G(1,t)$ converges to its unique fixed point $G_\lambda(1)$, so the $\omega$-limit set of the plane system must be contained in the line $(G_\lambda(1),\cdot)$. Thus, by the Jordan Curve theorem (see \cite{Teschl2012} p.~220), the $\omega$-limit set cannot be a regular periodic orbit. Therefore, we may conclude that the solution of \eqref{hatG} is required to converge to a stationary value $G(x,t)\to G_\lambda(x)$, where
\begin{equation}\label{H}
0=\lambda(x-G_\lambda(x))+\sum_{k=2}^{\infty}\frac{\alpha(k)}{k!}\left(G_\lambda(x)^k-kG_\lambda(1)^{k-1}G_\lambda(x)\right).
\end{equation}
At the point $x=1$ we rearrange to find $\lambda$ as a power series in $G_\lambda(1)$:
\begin{equation}
\lambda=\frac{1}{1-G_\lambda(1)}\sum_{k\geq m}\frac{\alpha(k)}{k!}(k-1)G_\lambda(1)^k=\frac{\alpha(m)}{m!}(m-1)G_\lambda(1)^m+\sum_{k>m}c_kG_\lambda(1)^k\,,
\end{equation}
where $m$ is the least integer such that $\alpha(m)>0$, and $c_k$ are the appropriate constants. From Theorem \ref{series} we thus determine that
\begin{equation*}
  G_\lambda(1)=\lambda^{1/m}\left(\frac{m!}{(m-1)\alpha(m)}\right)^{1/m}+o(\lambda^{1/m}).
\end{equation*}
For general $x$, equation \eqref{H} implies the same leading order behaviour of $G_\lambda(1)$ in $\lambda$. Recalling the definition $g_\lambda(x)=G_\lambda(x)/G_\lambda(1)$ at the start of this section, we see that
\begin{align*}
  0&=\lambda\left(x-G_\lambda(1)g_\lambda(x)\right)+\sum_{k=m}^{\infty}\frac{\alpha(k)}{k!}G_\lambda(1)^k\left(g_\lambda(x)^k-kg_\lambda(x)\right),\\
&=\lambda x - \left(\frac{m!}{(m-1)\alpha(m)}\right)^{1/m}\lambda^{1+1/m}g_\lambda(x)\\
&\qquad+\sum_{k=m}^{\infty}\frac{\alpha(k)}{k!}\left(\frac{m!}{(m-1)\alpha(m)}\right)^{k/m}\lambda^{k/m}\left(g_\lambda(x)^k-kg_\lambda(x)\right) + o(\lambda),\\
&=\lambda x +\frac{\lambda}{m-1}\left(g_\lambda(x)^m-mg_\lambda(x)\right)+o(\lambda), \quad x\in[0,1].
\end{align*}
In other words, when $m$ is the smallest integer such that $\alpha$ is non-zero, we find that in the limit as $\lambda\searrow0$,
\begin{equation*}
mg_\lambda(x)-g_\lambda(x)^{m}- x(m-1)=0.
\end{equation*}
{\color{black}Note that this limit exists as $g_\lambda$ is a Puiseux series in $\lambda$.}
Applying the inversion theorem once more, we obtain the series
\begin{align}
g_\lambda(x)&=\sum_{k=1}^{\infty}\frac{x^k}{k!}\lim_{z\to0}\left[\frac{d^{k-1}}{dz^{k-1}}\left(\frac{z}{f(z)}\right)^k\right]\label{f},
\end{align}
where $f(x)=m/(m-1)z-1/(m-1)z^{m}.$ We compute
\begin{align*}
\left(\frac{z}{f(z)}\right)^k&=\left(\frac{m-1}{m}\right)^k\left(\frac{1}{1-z^{m-1}/m}\right)^k\\
&=\left(\frac{m-1}{m}\right)^k\left(\sum_{n=0}^{\infty}\left(\frac{1}{m}\right)^nz^{n(m-1)}\right)^k\\
&=\left(\frac{m-1}{m}\right)^k\sum_{n=0}^{\infty}\left(\frac{1}{m}\right)^n\binom{k-1+n}{n}z^{n(m-1)},
\end{align*}
using the product formula for series. 
To get the summands for \eqref{f} we need to look at the $(k-1)^{th}$ derivative of the above series, evaluated at $z=0$. Hence we have that
\begin{eqnarray*}
\frac{d^{k-1}}{dz^{k-1}} \left(\frac{z}{f(z)}\right)^k\bigg|_{z=0}=\begin{cases}\left(\frac{m-1}{m}\right)^k\left(\frac{1}{m}\right)^{\frac{k-1}{m-1}}\left(\bfrac{m\left(\frac{k-1}{m-1}\right)}{\frac{k-1}{m-1}}\right)(k-1)! &\mbox{if } m-1\text{ divides }k-1\\
0 &\mbox{otherwise.}\end{cases}
\end{eqnarray*}
Plugging this into \eqref{f} and comparing coefficients of $x^k$, we have
\begin{equation*}
\lim_{\lambda\searrow0}p_k(\lambda)=\begin{cases}\frac{1}{k}\left(\frac{m-1}{m}\right)^k\left(\frac{1}{m}\right)^{\frac{k-1}{m-1}}\left(\bfrac{m\left(\frac{k-1}{m-1}\right)}{\frac{k-1}{m-1}}\right)&\mbox{if } m-1\text{ divides }k-1\\
0 &\mbox{otherwise.}\end{cases}
\end{equation*}
Hence, we can conclude that, for large values of $k$, under these conditions,
\begin{equation*}
\lim_{\lambda\searrow0}p_k(\lambda)\sim k^{-3/2},
\end{equation*}
regardless of the values of $\alpha(k)$, $k\geq2$.
\end{proof}

\section*{Acknowledgements}
We would like to thank Jason Schweinsberg for providing an intuitive explanation for the absence of even-sized clusters in the example of Figure~\ref{fig1}. TR gratefully acknowledges support from the Royal Society.

\bibliographystyle{abbrv}
\bibliography{short,mybibliography}
\end{document}